\newif\ifPDF
\newcommand{\addresseshere}{%
  \enddoc@text\let\enddoc@text\relax
}
\newtheorem{thm}{Theorem}[section]
\newtheorem{cor}[thm]{Corollary}
\newtheorem{lem}[thm]{Lemma}
\newtheorem{prop}[thm]{Proposition}
\theoremstyle{definition}
\newtheorem{defn}[thm]{Definition}
\newtheorem{ntn}[thm]{Notation}
\theoremstyle{remark}
\newtheorem{example}[thm]{Example}
\numberwithin{equation}{section}
\newcommand{\norm}[1]{\left\Vert#1\right\Vert}
\newcommand{\abs}[1]{\left\vert#1\right\vert}
\newcommand{\Comp}{\mathbb C}
\newcommand{\eps}{\varepsilon}
\newcommand{\Kzero}{\textrm{K}_0}
\begin{document}

%OPTIONS FOR LINE NUMBERS----------------------------------
%\linenumbers

\title{A tracially AF algebra which is not $\mathcal Z$-absorbing}

\author{Zhuang Niu}
\address{Department of Mathematics, University of Wyoming, Laramie, WY 82071, USA}
\email{zniu@uwyo.edu}

\author{Qingyun Wang}
\address{Department of Mathematics, University of Oregon, Eugene, OR 97403, USA}
\email{qingyunw@uoregon.edu}

%\thanks{The research of the first named author is partially supported by a Simons Collaboration Grant (Grant \#317222) and partially supported by an NSF grant (DMS-1800882). Part of the result in this paper were obtained during the visits of the second named author to the University of Wyoming in December 2017 and in June 2018, which were also supported by the Simons Collaboration Grant \#317222. The second named author would like to thank Chris Phillips for many helpful comments.}
%\keywords{}
\date{\today}
%\dedicatory{}
%\commby{}

%------------------------------------------abstract--------------------------------------

\begin{abstract}
We show that there is a simple separable unital (non-nuclear but exact) tracially AF algebra $A$ which does not absorb the Jiang-Su algebra $\mathcal Z$ tensorially, i.e., $A \ncong A\otimes\mathcal Z$.
\end{abstract}

\maketitle

\section{Introduction}
Recall that (\cite{LinTAF1}, \cite{LinTAF2}) a unital simple separable C*-algebra $A$ is said to be tracially AF, or TAF, if for any finite set $\mathcal F\subseteq A$, any $\eps>0$, and any $a\in A^+\setminus\{0\}$,  there is a non-zero finite-dimensional C*-subalgebra $F\subseteq A$ such that with $p=1_F$, 
\begin{enumerate}
\item $\norm{fp-pf} < \eps$, $f\in\mathcal F$,
\item $pfp \in_\eps F$, $f\in\mathcal F$, and
\item $1-p$ is Murray-von Neumann equivalent to a projection in $\overline{aAa}$.
\end{enumerate}

%A C*-algebra $A$ is called $\mathcal{Z}$-absorbing if $A \cong A \otimes \mathcal Z$, where $\mathcal Z$ is the Jiang-Su algebra. Tracial AF and $\mathcal Z$-absorption played an important role in the Elliott‘s classification program, a program which aimed at classifying the class of unital simple separable nuclear C*-algebras with the UCT by the so-called Elliott invariant. 
 
TAF algebras are relatively well behaved. They always have real rank zero, stable rank one, strict comparison of positive elements, and they are tracially $\mathcal Z$-absorbing (\cite{HO}). The classification of simple separable nuclear TAF algebras which satisfy the Universal Coefficient Theorem (UCT) is one of the milestones in Elliott's classification program for separable nuclear C*-algebras (\cite{LnDuke}), and this class of classifiable TAF algebras coincides with the class of simple AH algebras with real rank zero and with no dimension growth (\cite{EG-RR0AH}). By Corollary 3.1 of \cite{TW1}, it in particular implies that a simple separable nuclear TAF algebra $A$ with the UCT is $\mathcal Z$-absorbing, i.e., $A\cong A\otimes\mathcal Z$, where $\mathcal Z$ is the Jiang-Su algebra.
 
%The importance of the property of TAF and the property of $\mathcal Z$-absorbing goes beyond classification as illustrated by their link with a bunch of other nice properties of C*-algebras. TAF algebras always have real rank zero, stable rank one, strict comparison of positive elements, tracially $\mathcal Z$-absorbing (\cite{HO}), etc. For unital simple separable C*-algebras, $\mathcal Z$-absorption implies strict comparison of positive elements (\cite{Rordam04}) and almost divisibility. It is equivalent to finite nuclear dimension assuming nuclearity in addition (\cite{BBSTWW}, \cite{CETWW}). Therefore, it is of great interest to explore the relationship between these two properties for a broader class of C*-algebras.

But even without the UCT assumption, Matui and Sato showed that any simple separable nuclear TAF algebra is $\mathcal Z$-absorbing (\cite{Matui-Sato-CP}). In this note, we show that the nuclearity assumption is necessary for the $\mathcal Z$-absorption: there are non-nuclear (but exact---see Appendix) TAF algebras $A$ such that $A \ncong A\otimes\mathcal Z$. Since any tracially AF algebra is tracially $\mathcal Z$-absorbing (see Definition 2.1 of \cite{HO}), this also gives examples of (non-nuclear) tracially $\mathcal Z$-absorbing C*-algebras which are not $\mathcal Z$-absorbing, in contrast to the nuclear case (see Theorem 4.1 of \cite{HO}). (Among many other things, tracial $\mathcal Z$-absorption is also studied in \cite{Fu}).

The main tool that we use is a version of Property $\Gamma$ for C*-algebras. Recall (\cite{GJS-Z}) that a C*-algebra has Property $\Gamma$ if there is a central sequence of unitaries which vanish under all traces. It is a C*-algebra analog of Property $\Gamma$ of a von Neumann factor of type II$_1$. The reduced group C*-algebra of $\mathbf F_2$ (the free group on two generators) does not have the  Property $\Gamma$. In \cite{GJS-Z}, Gong, Jiang and Su showed that all $\mathcal Z$-absorbing C*-algebras have the Property $\Gamma$, and therefore the reduced group C*-algebra of $\mathbf F_2$ is not $\mathcal Z$-absorbing (see Section 2 of \cite{GJS-Z}).

In this note, a modified version of the Property $\Gamma$ is considered (see Definition \ref{prop-gamma}): instead of arbitrary traces, one considers a fixed state. It is shown that for any unital $\mathcal Z$-absorbing C*-algebra and any given state, there exists a central sequence consisting of unitaries which are arbitrarily small under the given state (Corollary \ref{Gamma}). On the other hand, there are TAF algebras in the class constructed by D\u{a}d\u{a}rlat in \cite{MDD-SUBAF} which do not have this property (Proposition \ref{non-Gamma} below), and hence they cannot be $\mathcal Z$-absorbing.

The authors hope that this work could lead to deeper investigations of possible connections between $\mathcal Z$-absorption of (non-nuclear) C*-algebras and central sequences in (type III) von Neumann algebras.

\subsubsection*{Acknowledgements} The research of the first named author is partially supported by a Simons Collaboration Grant (Grant \#317222) and by an NSF grant (DMS-1800882). Part of the results in this paper were obtained during the visits of the second named author to the University of Wyoming in December 2017 and in June 2018, and these visits were also partly supported by the Simons Collaboration Grant \#317222. The second named author would like to thank Chris Phillips for many helpful comments. Both authors are indebted to Caleb Eckhardt for improving the original construction to achieve exactness, see Appendix A. Both authors are also indebted to George Elliott for his careful reading of the paper.

\section{The main result and the proof}

Let $G$ be a countable discrete group. Let $\Comp [G]$, $\mathrm{C}^\ast_{\mathrm{red}}(G)$, and  $\mathrm{C}^\ast(G)$ denote the group algebra, the reduced group C*-algebra, and the full group C*-algebra of $G$ respectively.
The  trace map $ \Comp[ G] \ni a \mapsto a(e) \in \Comp$ can be extended to a tracial state of $\mathrm{C}^\ast_{\mathrm{red}} (G)$, and it is denoted by $\tau$ throughout this paper. 
For $g \in G$, we use $u_g$ to denote the associated standard unitary in
$\mathrm{C}^\ast_{\mathrm{red}} (G)$. We will frequently write $g$ for $u_g$ when there is no confusion.

%Let $a \in \mathrm{C}^\ast_{\mathrm{red}}(G)$. %Define $\norm{a}_2:=(\tau(aa^*))^\frac{1}{2}$, and use $\tilde{a}$ to denote the function
%\[
%\tilde{a}:  G \ni g\mapsto \tau(au_{g^{-1}}) \in\Comp.
%\]
%It is well known that  $ \tilde{a} \in\ell^2(G)$, $\|\tilde{a}\|_2 = \|a\|_2$, and the map $$ \mathrm{C}^\ast_{\mathrm{red}}(G) \ni a \mapsto \tilde{a} \in\ell^2(G)$$ is one-to-one. We will identify $\mathrm{C}^\ast_{\mathrm{red}}(G)$ with a subspace of $\ell^2(G)$. For an element $a \in \mathrm{C}^\ast_{\mathrm{red}}(G)$, we will simply write $a(g)$ for $\tau(au_{g^{-1}})$.

\subsection{D\u{a}d\u{a}rlat's construction}\label{construction}

The C*-algebras we shall consider in this paper were actually constructed by D\u{a}d\u{a}rlat in \cite{MDD-SUBAF}. We briefly describe the construction for the reader's convenience. 

A C*-algebra is called residually finite-dimensional, or RFD, if it has a separating family of finite-dimensional representations. Let $D$ be a separable unital RFD C*-algebra. Denote by $\pi_1, \pi_2,$... a sequence of finite-dimensional representations of $D$ which separates points, and denote by $n_1, n_2, ...$ the dimension of $\pi_1, \pi_2, ...$, respectively. Denote by $A$ the direct limit of $\mathrm{M}_{k_i}(D)$, where $k_1=1$ and $k_i=(n_1+1) \cdots (n_{i-1}+1)$ for $i = 2, 3, ...$, with the connecting map from $\mathrm{M}_{k_i}(D)$ to $\mathrm{M}_{k_{i+1}}(D)$ defined by
$$a\mapsto \mathrm{diag}(a, \pi_i (a)),\quad a \in \mathrm{M}_{k_i}(D).$$
Then $A$ is a simple unital separable TAF algebra. (See, for instance, Proposition 3.7.8 and Theorem 3.7.9 of \cite{Lin_book} or Example 4.16 of \cite{EN-Tapprox}.)

As a TAF algebra, $A$ has many regularity properties: real rank zero, stable rank one, strict order on projections is determined by traces, and any state on the ordered $\Kzero$-group arises from a trace (\cite{EN-Tapprox}). If $A$ is nuclear, then
$A$ is $\mathcal{Z}$-absorbing, by Theorem 5.4 of \cite{Matui-Sato-CP}. However, this is no longer true without the nuclearity assumption, as is shown by the following result, the main result of the paper.

\begin{thm}\label{main-thm}
There exists a simple separable unital (non-nuclear but exact) tracially AF algebra $A$ which does not absorb the Jiang-Su algebra $\mathcal Z$ tensorially, i.e. $A \ncong A\otimes\mathcal Z$.

More precisely, let $G$ be a discrete group which is not inner amenable (see \cite{Effros} or Definition \ref{def-inn-am} below), and let $D$ be a separable unital RFD C*-algebra such that $\mathrm{C}^\ast_{\mathrm{red}}(G)$ is a quotient of $D$. Denote by $A$ the TAF algebra constructed from $D$ as described above. Then $A$ is not $\mathcal Z$-absorbing, i.e., $A \ncong A\otimes\mathcal Z$. Moreover, with a suitable choice of $D$ and $\mathrm{C}^\ast_{\mathrm{red}}(G)$, the C*-algebra $A$ is exact.
\end{thm}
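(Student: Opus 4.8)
The plan is to show that the TAF algebra $A$ fails the modified Property $\Gamma$ with respect to a suitable state, and then invoke Corollary \ref{Gamma}: since every unital $\mathcal Z$-absorbing C*-algebra has this property, $A$ cannot be $\mathcal Z$-absorbing. Concretely, I would first choose a discrete group $G$ which is not inner amenable and a separable unital RFD C*-algebra $D$ admitting $\mathrm C^\ast_{\mathrm{red}}(G)$ as a quotient (for instance one can take $D$ to be a suitable RFD model mapping onto $\mathrm C^\ast_{\mathrm{red}}(G)$; the existence of such $D$ uses that residual finite-dimensionality can be arranged by tensoring with or mapping from a large enough RFD algebra). Form $A$ as D\u{a}d\u{a}rlat's direct limit of the $\mathrm M_{k_i}(D)$, so that $A$ is a simple separable unital TAF algebra.

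Next I would build the relevant state on $A$. The quotient map $D\to\mathrm C^\ast_{\mathrm{red}}(G)$ induces, at each stage, a map $\mathrm M_{k_i}(D)\to\mathrm M_{k_i}(\mathrm C^\ast_{\mathrm{red}}(G))$, and composing with $\tau_{\mathrm{red}}\otimes\mathrm{tr}_{k_i}$ gives a tracial state on $\mathrm M_{k_i}(D)$; one checks these are compatible with the connecting maps $a\mapsto\mathrm{diag}(a,\pi_i(a))$ up to the right normalization, so that in the limit they determine a state (not necessarily a trace, since $A$ is simple and infinite-dimensional with a unique trace of its own — so really one obtains a state $\varphi$ on $A$ that factors through a $\ast$-homomorphism $A\to\mathrm C^\ast_{\mathrm{red}}(G)\otimes\mathcal K$ or onto a corner thereof, composed with the canonical trace). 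The key point is that this state remembers the group von Neumann algebra $L(G)$: its GNS representation generates a factor whose trace is the canonical trace of $L(G)$, and the unitaries $u_g\in\mathrm C^\ast_{\mathrm{red}}(G)\subseteq A$ sit inside $A$ with $\varphi(u_g)=0$ for $g\neq e$.

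Then comes the heart of the argument, which is also where I expect the main obstacle to lie: showing that $A$ has no central sequence of unitaries that are small under $\varphi$. Suppose $(v_n)$ were such a sequence. The centrality forces $(v_n)$ to asymptotically commute with the copies of $u_g$ inside $A$; passing to the GNS representation associated to $\varphi$ and taking an appropriate ultralimit, one obtains a unitary in the central sequence algebra (or in a suitable ultrapower/relative commutant) of the factor $L(G)$ that is orthogonal to $1$ under the trace, i.e. has trace zero. Because $L(G)$ is built from a non-inner-amenable group, its central sequences are trivial in the strong sense that bounded sequences asymptotically commuting with all $u_g$ must asymptotically lie in $\mathbb C 1$ in trace — this is precisely the von Neumann algebraic characterization of (the negation of) inner amenability, going back to Effros. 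This contradicts $\tau(v_n)\to 0$. The delicate technical steps are: (i) verifying that centrality of $(v_n)$ in $A$ really does transfer to asymptotic commutation with the $u_g$ after applying the GNS map for $\varphi$, which requires controlling the images of the $u_g$ under the connecting maps and confirming they land in the hereditary piece seen by $\varphi$; and (ii) making the ultralimit/ultrapower bookkeeping precise so that a central sequence of unitaries in $A$ yields an honest element of the relevant relative commutant in $L(G)^\omega$. Once these are in place, the contradiction with non-inner-amenability is immediate, and exactness of $A$ for the right choice of $D$ and $G$ is handled separately in the Appendix.
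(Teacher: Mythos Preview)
Your high-level strategy---show that $A$ fails the modified Property $\Gamma$ with respect to a carefully chosen state and then invoke Corollary \ref{Gamma}---is exactly the paper's strategy. However, the execution of both the state construction and the non-$\Gamma$ argument contains genuine gaps.

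First, the state. The paper's state is \emph{not} built from $\tau_{\mathrm{red}}\otimes\mathrm{tr}_{k_i}$ and does not factor through any $\ast$-homomorphism $A\to\mathrm{C}^\ast_{\mathrm{red}}(G)\otimes\mathcal K$ (indeed $A$ is simple, so no such nonzero map exists unless it is injective). The correct state is the $(1,1)$-corner state: $\rho_\theta((a_{jk}))=\tau(\theta(a_{11}))$ on each $\mathrm{M}_{k_i}(D)$. This is compatible with the connecting maps $a\mapsto\mathrm{diag}(a,\pi_i(a))$ precisely because the $(1,1)$ entry is preserved, and it is very much \emph{not} a trace. Your normalized-matrix-trace proposal would not even be compatible with the connecting maps without ad hoc rescaling, and in any case would land you on a tracial state of $A$, where the argument is different.

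Second, and more seriously, your proposed passage to $L(G)^\omega$ hides the main difficulty. Given an approximately central unitary $v=\left(\begin{smallmatrix}a&b\\c&d\end{smallmatrix}\right)\in\mathrm{M}_{1+n}(D)$, commutation with $\mathrm{diag}(\check g,\pi(\check g))$ does give $\|g\,\theta(a)-\theta(a)\,g\|_2$ small, so $\theta(a)$ is an approximately inner-central element of $\mathrm{C}^\ast_{\mathrm{red}}(G)$; non-inner-amenability (Effros) then forces $\theta(a)$ close to a scalar in $\|\cdot\|_2$. But this alone does \emph{not} force $|\rho_\theta(v)|=|\tau(\theta(a))|$ to be near $1$: one could have $\|\theta(a)\|_2$ small and $\|\theta(b)\|_2$ near $1$ (since only $\|\theta(a)\|_2^2+\|\theta(b)\|_2^2=1$ is guaranteed by unitarity). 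Ruling this out requires showing that the off-diagonal blocks $b,c$ are small in $\|\cdot\|_2$, and that is where the paper does real work: the commutator condition gives $\|gb-b\pi(g)\|_2$ small, and a separate argument (Lemma \ref{inv-mean} and Corollary \ref{small-offd}) using non-amenability of $G$ converts this intertwining estimate into $\|b\|_2$ small via an almost-invariant-vector argument in $\ell^2(G)$. Your ultrapower sketch does not address this step, and without it there is no contradiction---the GNS von Neumann algebra $A''_{\rho_\theta}$ is not $L(G)$ but an infinite amplification of it, which has plenty of nontrivial central sequences living in the off-diagonal directions. The paper therefore works entirely at the C*-level with the seminorm $\|\cdot\|_\rho^\#$ and explicit matrix estimates (Corollary \ref{diag} and Proposition \ref{non-Gamma}), rather than via any von Neumann ultrapower.
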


Let $G$ be a countable discrete group which is not inner amenable. Then, there always exists a (separable unital) RFD C*-algebra $D$ which has $\mathrm{C}^\ast_{\mathrm{red}}(G)$ as a quotient (see Theorem 1.6 of \cite{GM90} or just choose $D$ to be the universal group C*-algebra of $\mathbf F_\infty$,  the free group on countably many generators). Thus the pair $(D, \mathrm{C}^\ast_{\mathrm{red}}(G))$ always exists for any discrete non-inner-amenable group $G$. Moreover, if $G$ is exact, $D$ can be chosen to be exact as well (see Proposition \ref{A1} of Appendix, by Caleb Eckhardt). 

The following are two concrete constructions of the pair $(D, \mathrm{C}^\ast_{\mathrm{red}}(G))$.
\begin{example}\label{example-F2}
Let $G$ be a countable discrete non-inner-amenable group such that $D:=\mathrm{C}^\ast(G)$ is RFD. Then the pair $(D, \mathrm{C}^\ast_{\mathrm{red}}(G))$ satisfies Theorem \ref{main-thm}. One particular example of such a group is $G=\mathbf F_d$, the free group on $d$ generators, where $d=2, 3, ..., \infty$. The group $\mathbf F_d$ is not inner amenable (see \cite{Effros}), and its full group C*-algebra $D$ is RFD by Theorem 7 of \cite{choi-F_2}.

The C*-algebra $A$ constructed from $G$ and $D$ as in \ref{construction} is not exact. In fact, by Theorem 1.1 of \cite{NP-Diagonal}, the group $G$ is maximally almost periodic. Since $G$ is assumed not to be inner amenable, $\mathrm{C^*}(G)$ is not exact by the main theorem of \cite{Harpe-exact}.
\end{example}

\begin{example}
Let $G$ be a countable discrete group which is not inner amenable. Assume that $\mathrm{C}^\ast_{\mathrm{red}}(G)$ is embedded into $\prod_{i=1}^\infty{\mathrm{M}_{n_i}(\Comp)}/\bigoplus_{i=1}^\infty \mathrm{M}_{n_i}(\Comp)$ for some matrix algebra $\mathrm{M}_{n_i}(\Comp)$, $i=1, 2, ...$ (the MF property). Then, the C*-algebra $D:=\pi^{-1}(\mathrm{C}^\ast_{\mathrm{red}}(G))\subseteq \prod{\mathrm{M}_{n_i}(\Comp)}$ is RFD, where $\pi$ is the quotient map $\prod_{i=1}^\infty{\mathrm{M}_{n_i}(\Comp)} \to \prod_{i=1}^\infty{\mathrm{M}_{n_i}(\Comp)}/\bigoplus_{i=1}^\infty \mathrm{M}_{n_i}(\Comp)$.  The pair $(D, \mathrm{C}^\ast_{\mathrm{red}}(G))$ satisfies Theorem \ref{main-thm}.

A particular example is $G= \mathbf F_d$. It follows from Corollary 8.4 of \cite{Haag-Thor} that $\mathrm{C}^\ast_{\mathrm{red}}(\mathbf F_d)$ is MF. Since $\mathrm{C}^\ast_{\mathrm{red}}(\mathbf F_d)$ can be embedded into the nuclear C*-algebra $\mathcal O_2$ (see \cite{Choi79}), $\mathrm{C}^\ast_{\mathrm{red}}(\mathbf F_d)$ is exact, and hence $D$ (as an extension of $\mathrm{C}^\ast_{\mathrm{red}}(\mathbf F_d)$ by $\bigoplus_{i=1}^\infty \mathrm{M}_{n_i}(\Comp)$) and $A$ (constructed above as an inductive limit of matrix algebras over $D$) are exact.
\end{example}

A more interesting example is given by Caleb Eckhardt (Proposition \ref{A2} of Appendix), where an exact RFD algebra $D$ is constructed between  $\mathrm{C}^\ast(\mathbf F_d)$  and $\mathrm{C}^\ast_{\mathrm{red}}(\mathbf F_d)$. Eckhardt also pointed out a general way to produce exact examples (Proposition \ref{A1} of Appendix).

\subsection{Central unitaries in \texorpdfstring{$A$}{A}}
We first introduce the following version of Property $\Gamma$ which is similar to Definition 2.1 of \cite{GJS-Z}: 
\begin{defn}\label{prop-gamma}
Let $A$ be a unital C*-algebra and let $S$ be a collection of states on $A$. We shall say that $A$ has \emph{Property $\Gamma$}
with respect to $S$ if there is a central sequence of unitaries  $(u_i)$ of $A$ such that
$\abs{\rho(u_i)} \to 0$ as $i\to\infty$ for any $\rho \in S$. If $S$ consists of a single state $\rho$, we shall say that $A$ has Property $\Gamma$ with respect to $\rho$.
\end{defn}

 For the C*-algebra $A$ constructed in Theorem \ref{main-thm}, we shall show that there is a state $\rho$ of $A$ such that $A$ does not have Property $\Gamma$ with respect to $\rho$. Let us start with a simple observation.
 
%\begin{lem}\label{pre-est-0}
%Let $\mathcal H$ be a Hilbert space and let $\xi, \eta\in \mathcal H$. Then
%$$(\norm{\xi}^2 - \norm{\eta}^2)^2 \leq 2 (\norm{\xi}^2 + \norm{\eta}^2)\norm{\xi-\eta}^2.$$
%\end{lem}
%\begin{proof}
%We have
%\begin{eqnarray*}
%(\norm{\xi}^2 - \norm{\eta}^2)^2 &=& \left( \langle \xi, \xi \rangle - \langle \eta, \eta \rangle \right)^2 \\
%& = & \left( \Re \langle \xi + \eta, \xi - \eta \rangle \right)^2 \\
%& \leq & \norm{\xi + \eta}^2 \norm{\xi - \eta}^2 \\
%& \leq & (\norm{\xi} + \norm{\eta})^2 \norm{\xi - \eta}^2 \\
%& = & 2(\norm{\xi}^2 + \norm{\eta}^2) \norm{\xi-\eta}^2.
%\end{eqnarray*}
%This completes the proof.
%\end{proof}

\begin{lem}\label{pre-est-2}
Let $D$ be a unital C*-algebra and let $n$ be a positive integer. %Recall that the norm on $\mathrm{M}_{m,n}(D)$ is defined by the formula $\|v\| = \|vv^*\|^{\frac{1}{2}}$, for $v \in \mathrm{M}_{m,n}(D)$. 
Let  $$u=\left(\begin{array}{cc} a & b \\ c & d \end{array}\right) \in \mathrm{M}_{1+n}(D)$$ be a matrix over $D$ with $a\in D$, $d\in \mathrm{M}_n(D)$, $b\in \mathrm{M}_{1, n}(D)$, and $c\in \mathrm{M}_{n, 1}(D)$. Then $$\norm{a}, \norm{b}, \norm{c}, \norm{d} \leq \|u\|.$$
\end{lem}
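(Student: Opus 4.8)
The plan is to realize each of the four blocks $a$, $b$, $c$, $d$ as a compression of $u$ by orthogonal projections in $\mathrm{M}_{1+n}(D)$ and then invoke the fact that compressing by projections cannot increase the norm. Write $1_{1+n}=p\oplus q$ where $p=e_{11}$ is the rank-one corner projection (the unit of the top-left copy of $D$) and $q=1_{1+n}-p$ is the complementary projection (the unit of the bottom-right copy of $\mathrm{M}_n(D)$); these are central-enough in the sense that they are honest projections in $\mathrm{M}_{1+n}(D)$, so left/right multiplication by them is norm non-increasing. Then $a=pup$, $d=qup$ viewed inside $q\mathrm{M}_{1+n}(D)q\cong\mathrm{M}_n(D)$ (more precisely $d$ corresponds to $quq$), $b$ corresponds to $puq$, and $c$ corresponds to $qup$. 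Since $\norm{pxq}\le\norm p\,\norm x\,\norm q=\norm x$ for any $x$ and any contractive projections $p,q$, we get $\norm a,\norm b,\norm c,\norm d\le\norm u$ immediately, where one also uses that the isometric identification $q\mathrm{M}_{1+n}(D)q\cong\mathrm{M}_n(D)$ sends $quq$ to $d$ isometrically, and similarly the corners $p\mathrm{M}_{1+n}(D)q$, $q\mathrm{M}_{1+n}(D)p$ carry the norms that $b$, $c$ inherit as matrices over $D$.

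Alternatively, and perhaps more transparently, I would argue directly at the level of Hilbert-module (or faithful representation) norms: fix a faithful unital representation $D\subseteq B(H)$, so that $\mathrm{M}_{1+n}(D)\subseteq B(H^{1+n})$ isometrically and likewise $\mathrm{M}_n(D)\subseteq B(H^n)$, and $\mathrm{M}_{1,n}(D)$, $\mathrm{M}_{n,1}(D)$ act as operators between $H^n$ and $H$. For $\xi\in H$ one has $u(\xi\oplus 0)=(a\xi)\oplus(c\xi)$, whence $\norm{a\xi}^2+\norm{c\xi}^2=\norm{u(\xi\oplus0)}^2\le\norm u^2\norm\xi^2$, giving $\norm a\le\norm u$ and $\norm c\le\norm u$. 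Symmetrically, for $\eta\in H^n$ one has $u(0\oplus\eta)=(b\eta)\oplus(d\eta)$, so $\norm{b\eta}^2+\norm{d\eta}^2\le\norm u^2\norm\eta^2$, giving $\norm b\le\norm u$ and $\norm d\le\norm u$. Taking suprema over unit vectors in $H$ and $H^n$ respectively yields all four inequalities.

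There is essentially no obstacle here: the only points requiring a word of care are that the norms of $b$ and $c$ as elements of $\mathrm{M}_{1,n}(D)$ and $\mathrm{M}_{n,1}(D)$ are the genuine operator norms of the corresponding off-diagonal corners of $\mathrm{M}_{1+n}(D)$ (which is exactly how those rectangular matrix spaces are normed, via the linking algebra), and that a faithful representation of $D$ induces a faithful, hence isometric, representation of all the matrix amplifications simultaneously — this is standard, since $*$-homomorphisms between C*-algebras are contractive and injective $*$-homomorphisms are isometric. With that in place the computation above is complete.
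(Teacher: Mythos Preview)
Your first approach is essentially identical to the paper's own proof: set $p=\mathrm{diag}(1,0)$, $q=\mathrm{diag}(0,1_n)$, identify $a,b,c,d$ with $pup$, $puq$, $qup$, $quq$ (note the small slip where you first wrote $d=qup$ before correcting to $quq$), and conclude from $\|p\|=\|q\|=1$. Your alternative Hilbert-space computation is also correct but not needed; the paper stops after the projection argument.
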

\begin{proof}
Let $p=\mathrm{diag}(1, 0)$ and $q=\mathrm{diag}(0, 1_n)$. Identify $a$ with 
$\left(\begin{array}{cc} a & 0 \\ 0 & 0 \end{array}\right)$ and similarly for $b, c, d$. (This is justified since the identification does not change the norm.) Then
 $$a=pup,\quad b=puq,\quad c=qup,\quad\mathrm{and}\quad d=quq.$$
The lemma follows. 
\end{proof}

\begin{ntn}\label{notation}
Let $G$ be any discrete group and let $m, n$ be natural numbers. We shall use $\norm{\cdot}_{\mathrm{red}}$ to denote the operator norm on $\mathrm{M}_{m, n}(\mathrm{C}_{\mathrm{red}}^\ast(G))$ induced by the C*-norm of 
$\mathrm{C}_{\mathrm{red}}^\ast(G)$. 

Recall that $\tau$ is the canonical trace of $\mathrm{C}_{\mathrm{red}}^\ast(G)$. For $b = (b_1, b_2, ..., b_n) \in \mathrm{M}_{1,n}(\mathrm{C}_{\mathrm{red}}^\ast(G))$, define 
\[
\norm{b}_2 = (\tau(b_1b_1^* + b_2 b_2^* + \cdots + b_n b_n^*))^{\frac{1}{2}} = \tau(bb^*)^{\frac{1}{2}},
\]
and define $\tilde{b}$ to be the function
$$\tilde{b}: G \ni \gamma \mapsto \|b(\gamma)\|_2 = \left(\sum_{i=1}^n \abs{b_{i}(\gamma)}^2 \right)^{\frac{1}{2}}\in\Comp.$$
\end{ntn}

It is straightforward to check that $\|\tilde{b}\|_2 = \|b\|_2$ for any $b \in \mathrm{M}_{1,n}(\mathrm{C}_{\mathrm{red}}^\ast(G))$. In particular, the function $\tilde{b}$ defined above is in $l^2(G)$.

\begin{lem}\label{norms} 
Let $G$ be a discrete group. Let $n \in \mathbb{N}$ and let $a, b$ be
elements in  $\mathrm{M}_{1, n}(\mathrm{C}_{\mathrm{red}}^\ast (G))$. With the notation as \ref{notation}, one has
\begin{enumerate}
    \item \label{p_1} $\widetilde{bu} = \tilde{b}$ for any unitary $u \in \mathrm{M}_n(\Comp)$,
    \item \label{p_2} $\|\tilde{b}\|_2  = \|b\|_2 \leq \|b\|_{\mathrm{red}}$ for any $b \in \mathrm{M}_{1, n}(\mathrm{C}_{\mathrm{red}}^\ast (G))$,
    \item \label{p_3} $\|bu\|_2 = \|b\|_2$ for any unitary $u \in \mathrm{M}_n(\Comp)$,
    \item \label{p_4} $\|g\tilde{a} - \tilde{b}\|_2 \leq \|ga - b\|_2$ for any $g \in G$, and
    \item \label{p_5} $\|b - bd\|_2^2 \leq 2\|b\|_2(\|b\|_2 - \|bd\|_2)$ for any matrix $d \in \mathrm{M}_n(\Comp)$ which is diagonal, positive, and contractive.
\end{enumerate}
\end{lem}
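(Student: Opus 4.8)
The plan is to verify each item essentially by unwinding the definitions in Notation \ref{notation}, relying on the fact that $\tau$ is a faithful tracial state on $\mathrm{C}^\ast_{\mathrm{red}}(G)$ realized as vector states against the canonical basis $(u_\gamma)_{\gamma\in G}$ of $\ell^2(G)$, together with the identity $\|\tilde b\|_2=\|b\|_2$ noted just before the lemma. For \eqref{p_1}, I would observe that right multiplication by a scalar unitary $u\in\mathrm{M}_n(\Comp)$ acts, coordinatewise at each $\gamma\in G$, as a unitary on the vector $(b_1(\gamma),\dots,b_n(\gamma))\in\Comp^n$, hence preserves its Euclidean norm; thus $\widetilde{bu}(\gamma)=\|b(\gamma)u\|_2=\|b(\gamma)\|_2=\tilde b(\gamma)$ for every $\gamma$. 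Item \eqref{p_2} is the already-noted identity $\|\tilde b\|_2=\|b\|_2$ together with the standard estimate $\|b\|_2=\tau(bb^*)^{1/2}\le\|bb^*\|^{1/2}_{\mathrm{red}}=\|b\|_{\mathrm{red}}$, using that $\tau$ is a state. Item \eqref{p_3} follows immediately by combining \eqref{p_1} with the first equality in \eqref{p_2}: $\|bu\|_2=\|\widetilde{bu}\|_2=\|\tilde b\|_2=\|b\|_2$.

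For \eqref{p_4}, the key point is that the map $b\mapsto\tilde b$ is ``norm-contractive for the left $G$-action'' in the $\ell^2$ sense. I would write $\|ga-b\|_2^2=\sum_{\gamma\in G}\|(ga-b)(\gamma)\|_2^2$; since $(ga)(\gamma)=a(g^{-1}\gamma)$ coordinatewise, the triangle inequality in $\Comp^n$ gives $\|(ga-b)(\gamma)\|_2\ge\big|\,\|a(g^{-1}\gamma)\|_2-\|b(\gamma)\|_2\,\big|=\big|\tilde a(g^{-1}\gamma)-\tilde b(\gamma)\big|=\big|(g\tilde a)(\gamma)-\tilde b(\gamma)\big|$, where $g$ now acts by left translation on $\ell^2(G)$. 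Summing the squares over $\gamma$ yields $\|ga-b\|_2^2\ge\|g\tilde a-\tilde b\|_2^2$, which is the claim. The only mild subtlety is bookkeeping the left-regular-representation convention so that the coordinate-shift identity $(ga)(\gamma)=a(g^{-1}\gamma)$ is applied consistently; there is no real analytic content.

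Item \eqref{p_5} is the one requiring a genuine (if short) estimate, and I expect it to be the main obstacle — though a minor one. The idea is to expand $\|b-bd\|_2^2=\tau\big((b-bd)(b-bd)^*\big)=\|b\|_2^2-\tau(bdb^*)-\tau(bd b^*)^{-}+\tau(bd^2b^*)$; using that $d=d^*$ is real so $\tau(bdb^*)$ is real and nonnegative (as $d\ge 0$), this is $\|b\|_2^2-2\tau(bdb^*)+\tau(bd^2b^*)$. Since $0\le d\le 1$ we have $d^2\le d$, hence $\tau(bd^2b^*)\le\tau(bdb^*)$, giving $\|b-bd\|_2^2\le\|b\|_2^2-\tau(bdb^*)$. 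It then remains to bound $\|b\|_2^2-\tau(bdb^*)$ by $2\|b\|_2(\|b\|_2-\|bd\|_2)$. For this I would use $\|bd\|_2^2=\tau(bd^2b^*)\le\tau(bdb^*)$, so $\|b\|_2^2-\tau(bdb^*)\le\|b\|_2^2-\|bd\|_2^2=(\|b\|_2-\|bd\|_2)(\|b\|_2+\|bd\|_2)\le 2\|b\|_2(\|b\|_2-\|bd\|_2)$, where the last step uses $\|bd\|_2\le\|b\|_2$ (again from $d\le 1$, or from \eqref{p_2}-type reasoning). Chaining these inequalities gives \eqref{p_5}. All five parts are thus reduced to the faithful-trace formalism plus the elementary operator inequality $d^2\le d$ for $0\le d\le 1$.
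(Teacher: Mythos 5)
Your proposal is correct and follows essentially the same route as the paper: items (1)--(3) by direct computation, item (4) via the reverse triangle inequality in $\Comp^n$ applied pointwise over $\gamma\in G$, and item (5) reducing to $\|b-bd\|_2^2\le\|b\|_2^2-\|bd\|_2^2$ followed by factoring. The only cosmetic difference is that in (5) you work abstractly with the operator inequality $d^2\le d$ under the trace, whereas the paper writes the same estimate coordinatewise as $(1-\lambda_i)^2\le 1-\lambda_i^2$; these are the same inequality.
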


\begin{proof}
(\ref{p_1}) and (\ref{p_2}) follow from straightforward computation. (\ref{p_3}) is a direct consequence of (\ref{p_1}) and (\ref{p_2}). For (\ref{p_4}), note that for each $\gamma \in G$, $a(\gamma)$ is a vector in $\Comp^n$ and $\|a(\gamma)\|_2$ is the vector norm. Using triangle inequality at the third step and (1) at the last step, we have
\begin{align*}
\|g\tilde{a} - \tilde{b}\|_2^2
& =  \sum_{\gamma \in G} \abs{(g\tilde{a})(\gamma) - \tilde{b}(\gamma)}^2 \\ 
& =  \sum_{\gamma \in G} \left(\norm{a(g^{-1}\gamma)}_2 - \norm{b(\gamma)}_2 \right)^2 \\
& \leq  \sum_{\gamma \in G} \norm{a(g^{-1}\gamma)- b(\gamma)}_2^2  \\ 
& =  \sum_{\gamma \in G} \norm{(ga-b)(\gamma)}_2^2 \\
& = \|\widetilde{ga-b}\|_2^2 = \|ga-b\|_2^2.
\end{align*}
For (\ref{p_5}), set $b = (b_1, b_2, ..., b_n)$ and $d = \mathrm{diag}\{\lambda_1, ..., \lambda_n\}$, where $b_i \in \mathrm{C}_{\mathrm{red}}^\ast (G)$ and $\lambda_i \in [0, 1]$ for $i = 1, 2,..., n$. Note that $(1-\lambda_i)^2 \leq (1-\lambda_i)(1+ \lambda_i) = (1 - \lambda_i^2)$, and hence
\begin{align*} 
\|b - bd\|_2^2
& =  \sum_{\gamma \in G}\sum_{i = 1}^n (1 - \lambda_i)^2|b_i(\gamma)|^2 \\ 
& \leq  \sum_{\gamma \in G}\sum_{i = 1}^n (1 - \lambda_i^2)|b_i(\gamma)|^2 = (\|b\|_2^2 - \|bd\|_2^2) \\
& = (\|b\|_2 + \|bd\|_2)(\|b\|_2 - \|bd\|_2) \leq 2\|b\|_2(\|b\|_2 - \|bd\|_2),
\end{align*}
as desired.
\end{proof}

\begin{lem}\label{inv-mean} Let $G$ be a discrete group, and let $n \in \mathbb{N}$. Let $b$ be an element of $\mathrm{M}_{1, n}(\mathrm{C}_{\mathrm{red}}^\ast (G))$ with $\norm{b}_{\mathrm{red}} \leq 1$, and let $g \in G$. Assume that there are $\eps > 0$ and a matrix $\pi(g) \in \mathrm{M}_n(\Comp)$ with norm at most $1$ such that  
\begin{equation}\label{comm-0}
\norm{gb - b\pi(g)}_2 < \eps.
\end{equation}
Then, with notation as \ref{notation}, one has
$\|g\tilde{b} - \tilde{b}\|_2 < \eps + \sqrt{2\eps}$.
\end{lem}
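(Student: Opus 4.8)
The plan is to factor the estimate through the contraction $\pi(g)$: by the triangle inequality
\[
\|g\tilde b - \tilde b\|_2 \le \|g\tilde b - \widetilde{b\pi(g)}\|_2 + \|\widetilde{b\pi(g)} - \tilde b\|_2 ,
\]
so it suffices to bound the first summand by $\eps$ and the second by $\sqrt{2\eps}$. The first bound is immediate: applying Lemma \ref{norms}(\ref{p_4}) with the pair $(b,\,b\pi(g))$ in place of $(a,b)$ and invoking the hypothesis \eqref{comm-0} gives $\|g\tilde b - \widetilde{b\pi(g)}\|_2 \le \|gb - b\pi(g)\|_2 < \eps$.

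The substance of the argument is the second bound, and the one delicate point is that $\pi(g)$ is only a contraction, not a unitary (were it unitary, Lemma \ref{norms}(\ref{p_1}) would give $\widetilde{b\pi(g)} = \tilde b$ outright). To get around this I would take a singular value decomposition $\pi(g) = V\Sigma W^\ast$ with $V, W \in \mathrm{M}_n(\Comp)$ unitary and $\Sigma \in \mathrm{M}_n(\Comp)$ diagonal, positive, and contractive (the last because $\|\pi(g)\| \le 1$). Since right multiplication by the unitary $W^\ast$ leaves $\widetilde{\cdot}$ unchanged by Lemma \ref{norms}(\ref{p_1}), setting $c := bV$ gives $\widetilde{b\pi(g)} = \widetilde{c\Sigma}$ and $\tilde c = \tilde b$. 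Evaluation at each $\gamma \in G$ commutes with right multiplication by the scalar matrix $\Sigma$, so $\bigl|\widetilde{c\Sigma}(\gamma) - \tilde c(\gamma)\bigr| = \bigl|\,\|c(\gamma)\Sigma\|_2 - \|c(\gamma)\|_2\,\bigr| \le \|c(\gamma) - c(\gamma)\Sigma\|_2$; squaring and summing over $\gamma$ yields $\|\widetilde{c\Sigma} - \tilde c\|_2 \le \|c - c\Sigma\|_2$.

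Now Lemma \ref{norms}(\ref{p_5}) applied to $c$ and the diagonal positive contraction $\Sigma$ gives $\|c - c\Sigma\|_2^2 \le 2\|c\|_2(\|c\|_2 - \|c\Sigma\|_2)$, and $\|c\|_2 = \|b\|_2 \le 1$ by Lemma \ref{norms}(\ref{p_3}),(\ref{p_2}) together with $\|b\|_{\mathrm{red}}\le 1$. It remains to observe that $\|c\|_2 - \|c\Sigma\|_2 < \eps$: stripping off the unitaries with Lemma \ref{norms}(\ref{p_3}) gives $\|c\|_2 = \|bV\|_2 = \|b\|_2$ and $\|c\Sigma\|_2 = \|bV\Sigma W^\ast\|_2 = \|b\pi(g)\|_2$, while $\|b\|_2 = \|gb\|_2$ since $\tau$ is a trace and hence left multiplication by $u_g$ is $\|\cdot\|_2$-isometric; therefore $\|c\|_2 - \|c\Sigma\|_2 = \|gb\|_2 - \|b\pi(g)\|_2 \le \|gb - b\pi(g)\|_2 < \eps$ by \eqref{comm-0}. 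Putting these together, $\|\widetilde{b\pi(g)} - \tilde b\|_2 = \|\widetilde{c\Sigma} - \tilde c\|_2 \le \|c - c\Sigma\|_2 < \sqrt{2\eps}$, and combining with the first bound gives $\|g\tilde b - \tilde b\|_2 < \eps + \sqrt{2\eps}$. The main obstacle is exactly this reduction of the contractive $\pi(g)$ to a diagonal contraction via the singular value decomposition, after which the ``defect'' inequality Lemma \ref{norms}(\ref{p_5}) does the remaining work.
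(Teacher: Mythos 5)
Your argument is correct and is essentially the paper's own proof: the paper likewise reduces the contraction $\pi(g)$ to a diagonal positive contraction sandwiched between unitaries (via the polar decomposition $\pi(g)=u(wdw^*)$, which is just your singular value decomposition rewritten), splits off an $\eps$ term controlled by Lemma \ref{norms}(\ref{p_4}) and the hypothesis \eqref{comm-0}, and controls the remaining term with the defect inequality Lemma \ref{norms}(\ref{p_5}) exactly as you do. The only differences are cosmetic (where the triangle inequality is split, and your explicit pointwise reverse-triangle-inequality step, which is the $g=e$ case of Lemma \ref{norms}(\ref{p_4})).
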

\begin{proof}
Applying the polar decomposition to the matrix $\pi(g)$, we have unitaries $u, w\in \mathrm{M}_n(\Comp)$ and a diagonal matrix $d = \mathrm{diag}\{\lambda_1, ..., \lambda_n\}$, where $\lambda_i \in [0, 1]$, $i = 1, 2,..., n$, such that $$\pi(g) = u(wdw^*).$$

It follows from Lemma \ref{norms}(\ref{p_3}) and \eqref{comm-0} that %$\norm{(g(bw) - buwd) (g(bw) - buwd)^* }_{\mathrm{red}} < \eps^2$. Hence
\begin{equation} \label{esti-0}
\norm{g(bw) - buwd}_2  =  \norm{(gb)w - buwd}_2  =  \norm{gb - buwdw^*}_2  < \eps.
\end{equation}

Since $u, w$ are unitary matrices, by Lemma \ref{norms}(\ref{p_3}) again, we have 
\begin{equation}\label{mea-pres}
\norm{buw}_2 = \norm{b}_2= \norm{bw}_2= \norm{g(bw)}_2 \approx_\eps  \norm{buwd}_2.
 \end{equation}
Since $\|b\|_{\mathrm{red}} \leq 1$, It follows from (\ref{p_2}) and (\ref{p_3}) of Lemma \ref{norms}  that
\begin{equation} \label{esti-1}
    \|buw\|_2 = \|b\|_2 \leq \|b\|_{\mathrm{red}} \leq 1.
\end{equation}
Using Lemma \ref{norms}(\ref{p_1}) in the first step, triangle inequality in the second step,
Lemma \ref{norms}(\ref{p_4}) in the third step, (\ref{esti-0}) and Lemma \ref{norms}(\ref{p_5}) in the fourth step and (\ref{mea-pres}) and (\ref{esti-1}) in the final step, we get
\begin{align*}
    \|g\tilde{b} - \tilde{b}\|_2 
& = \|g\widetilde{bw} - \widetilde{buw}\|_2 \\
& \leq \|g\widetilde{bw} - \widetilde{buwd}\|_2 + \|\widetilde{buwd} - \widetilde{buw}\|_2 \\
& \leq \|gbw - buwd\|_2 + \|(buw)d - buw\|_2 \\
& \leq \eps + 2\|buw\|_2^{\frac{1}{2}}(\|(buw)\|_2 - \|buw\|_2)^{\frac{1}{2}} \\
& \leq \eps + 2\sqrt{\eps},
\end{align*}
as desired.
\end{proof}

\begin{lem}\label{small-vec}
Let $ G$ be a countable discrete group which is not amenable. For any $\eps>0$, there exist $\delta>0$ and a finite set $K \subseteq G$ such that if $\xi \in l^2(G)$ satisfies
$$\norm{g\xi - \xi}_2 < \delta,\quad g\in K,$$
then $\norm{\xi}_2 < \eps.$
\end{lem}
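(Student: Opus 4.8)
The statement is essentially the contrapositive of a standard characterization of amenability: a group $G$ is amenable if and only if it has almost-invariant unit vectors in $\ell^2(G)$, equivalently, the trivial representation is weakly contained in the regular representation. So the plan is to argue by contradiction using a compactness/diagonal argument. Suppose the conclusion fails: then there is some $\eps_0>0$ such that for every $\delta>0$ and every finite $K\subseteq G$, there exists a unit-ish vector $\xi$ (of norm at least $\eps_0$, which after normalizing we may take to be a unit vector) with $\norm{g\xi-\xi}_2<\delta$ for all $g\in K$. Enumerate $G=\{g_1,g_2,\dots\}$, and for each $m$ apply this with $K=\{g_1,\dots,g_m\}$ and $\delta=1/m$ to obtain unit vectors $\xi_m\in\ell^2(G)$ with $\norm{g\xi_m-\xi_m}_2<1/m$ for all $g\in\{g_1,\dots,g_m\}$. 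Then $(\xi_m)$ is a sequence of almost-invariant unit vectors, which directly witnesses amenability of $G$, contradicting the hypothesis that $G$ is not amenable.

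Concretely, the cleanest route is to pass from almost-invariant vectors to an invariant mean. From the $\xi_m$ one forms the states $\varphi_m$ on $\ell^\infty(G)$ (or on $B(\ell^2(G))$, restricted to the multiplication operators) by $\varphi_m(f)=\langle M_f\xi_m,\xi_m\rangle$ where $M_f$ is multiplication by $f$. A standard estimate shows $|\varphi_m(g\cdot f)-\varphi_m(f)|\to 0$ as $m\to\infty$ for each fixed $g$ and $f$; taking a weak-$*$ cluster point $\varphi$ of $(\varphi_m)$ in the state space of $\ell^\infty(G)$ (which is weak-$*$ compact) yields a left-invariant mean on $G$, so $G$ is amenable — the desired contradiction. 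Alternatively one may invoke directly the equivalence ``$G$ not amenable $\iff$ $1_G\not\prec\lambda_G$'' and note that the hypothesized family of vectors would exhibit $1_G\prec\lambda_G$; but spelling out the mean keeps the argument self-contained.

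There is one small point worth being careful about: the Følner/invariant-mean formalism usually uses the \emph{left} regular representation and left translation $g\cdot f(x)=f(g^{-1}x)$, and one must check that the almost-invariance condition $\norm{g\xi-\xi}_2<\delta$ on vectors transfers correctly to almost-invariance of the induced states under the same side of translation; this is routine but should be stated in the right convention. The genuine content of the lemma is entirely the nonamenability hypothesis, invoked through the characterization of amenability by almost-invariant vectors (equivalently, weak containment of the trivial representation in the regular representation, equivalently the existence of an invariant mean). So I do not expect a real obstacle: the main (and only) step requiring care is the passage ``almost-invariant unit vectors $\Rightarrow$ invariant mean,'' which is classical (see, e.g., the standard references on amenability), and the rest is a diagonal extraction over a countable exhaustion of $G$. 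The countability of $G$ is used precisely to make this diagonal argument go through, producing a single sequence of almost-invariant vectors from the assumed failure of a uniform $\delta$–$K$ bound.
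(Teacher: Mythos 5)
Your argument is correct and is essentially identical to the paper's proof: negate the statement, extract along an exhaustion $K_n$ with $\delta = 1/n$ a sequence of vectors of norm at least $\eps_0$, normalize to get almost-invariant unit vectors for the left regular representation, and conclude amenability, a contradiction. The only difference is that you additionally spell out the classical passage from almost-invariant vectors to an invariant mean, which the paper simply invokes as the standard characterization of amenability.
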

\begin{proof}
Let $(K_n)$ be an increasing sequence of finite subsets of $G$ whose union is $G$. If the statement were not true, there will be $\eps_0>0$ such that for any $n \in \mathrm{N}$, there is $\xi_n \in l^2(G)$ with $$\norm{g\xi_n - \xi_n}_2 < \frac{1}{n},\quad g\in K_n,$$ but $$\norm{\xi_n}_2 \geq \eps_0.$$ Then the sequence $\{ \norm{\xi_n}_2^{-1}\xi_n \colon n=1, 2, ... \}$ forms an almost invariant vector for the left regular representation of $G$, which implies that $G$ is amenable, a contradiction.
\end{proof}

The following result is a consequence of Lemma \ref{inv-mean} and Lemma \ref{small-vec}. 
\begin{cor}\label{small-offd}
Let $G$ be a countable discrete non-amenable group. For any $\eps>0$, there exist $\delta>0$ and a finite set $K \subseteq G$ with the following property: 

For any $n \in \mathbb{N}$ and any $b \in \mathrm{M}_{1, n}(\mathrm{C}^\ast_{\mathrm{red}}(G))$, if $\|b\|_{\mathrm{red}} \leq 1$, and if for each $g\in K$ there is a matrix $\pi(g) \in \mathrm{M}_n(\Comp)$ with norm at most $1$ such that
\begin{equation}
\norm{gb - b\pi(g)}_2 < \delta,
\end{equation}
then $\norm{b}_2 < \eps.$
\end{cor}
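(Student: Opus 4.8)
The plan is to deduce Corollary~\ref{small-offd} by combining the two preceding lemmas in a completely routine way, since the work has already been packaged: Lemma~\ref{inv-mean} converts an approximate intertwining $\|gb-b\pi(g)\|_2<\delta$ into an approximate invariance $\|g\tilde b-\tilde b\|_2<\delta+\sqrt{2\delta}$ of the scalar function $\tilde b\in l^2(G)$, and Lemma~\ref{small-vec} says that a function in $l^2(G)$ which is sufficiently invariant over a large enough finite set must have small norm; finally $\|b\|_2=\|\tilde b\|_2$ by Lemma~\ref{norms}(\ref{p_2}) (equivalently the remark following Notation~\ref{notation}), so smallness of $\|\tilde b\|_2$ is exactly smallness of $\|b\|_2$.

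Concretely, here is the order of operations. First, given $\eps>0$, apply Lemma~\ref{small-vec} with the same $\eps$ to obtain a threshold $\delta_0>0$ and a finite set $K\subseteq G$ such that any $\xi\in l^2(G)$ with $\|g\xi-\xi\|_2<\delta_0$ for all $g\in K$ satisfies $\|\xi\|_2<\eps$. (Here I use that a countable non-amenable group is in particular non-amenable in the sense required by Lemma~\ref{small-vec}; amenability for countable discrete groups is the relevant hypothesis there.) Second, choose $\delta>0$ small enough that $\delta+\sqrt{2\delta}<\delta_0$; for instance any $\delta\le\delta_0^2/8$ works since then $\sqrt{2\delta}\le\delta_0/2$ and $\delta\le\delta_0/2$ (shrinking further if necessary so $\delta\le 1$). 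Third, suppose $b\in \mathrm{M}_{1,n}(\mathrm{C}^\ast_{\mathrm{red}}(G))$ with $\|b\|_{\mathrm{red}}\le 1$ and, for each $g\in K$, a contraction $\pi(g)\in\mathrm{M}_n(\Comp)$ with $\|gb-b\pi(g)\|_2<\delta$. Apply Lemma~\ref{inv-mean} to each $g\in K$: it yields $\|g\tilde b-\tilde b\|_2<\delta+\sqrt{2\delta}<\delta_0$. Fourth, $\tilde b\in l^2(G)$ (noted after Notation~\ref{notation}), so Lemma~\ref{small-vec} applies to $\xi=\tilde b$ and gives $\|\tilde b\|_2<\eps$. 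Fifth, conclude $\|b\|_2=\|\tilde b\|_2<\eps$ by Lemma~\ref{norms}(\ref{p_2}).

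There is essentially no obstacle here: this is a glue argument. The only points requiring a sentence of care are (a) the quantifier order — $\delta$ and $K$ must be chosen uniformly in $n$, which is automatic because $K$ and $\delta_0$ come out of Lemma~\ref{small-vec} with no reference to $n$, and Lemma~\ref{inv-mean} is also stated with constants independent of $n$; and (b) matching the hypothesis $\|b\|_{\mathrm{red}}\le 1$ needed by Lemma~\ref{inv-mean} with the one given in the statement, which is literally the same. I would also remark in passing that one could absorb the $\sqrt{2\delta}$ by just feeding Lemma~\ref{small-vec} the modulus directly, but the explicit bound $\delta\le\delta_0^2/8$ keeps things self-contained.

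\begin{proof}
Let $\eps>0$. By Lemma~\ref{small-vec}, applied to the non-amenable group $G$, there are $\delta_0>0$ and a finite set $K\subseteq G$ such that any $\xi\in l^2(G)$ with $\|g\xi-\xi\|_2<\delta_0$ for all $g\in K$ satisfies $\|\xi\|_2<\eps$. Choose $\delta>0$ with $\delta\le 1$ and $\delta+\sqrt{2\delta}<\delta_0$ (for instance $\delta=\min\{1,\delta_0^2/8\}$, using $\sqrt{2\delta}\le\delta_0/2$ and $\delta\le\delta_0^2/8\le\delta_0/2$ when $\delta_0\le 1$, and shrinking $\delta_0$ if necessary).

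Now let $n\in\mathbb{N}$ and $b\in\mathrm{M}_{1,n}(\mathrm{C}^\ast_{\mathrm{red}}(G))$ with $\|b\|_{\mathrm{red}}\le 1$, and suppose that for each $g\in K$ there is a matrix $\pi(g)\in\mathrm{M}_n(\Comp)$ of norm at most $1$ with $\|gb-b\pi(g)\|_2<\delta$. Fix $g\in K$. By Lemma~\ref{inv-mean},
\[
\|g\tilde b-\tilde b\|_2<\delta+\sqrt{2\delta}<\delta_0.
\]
Since $\tilde b\in l^2(G)$ (see the remark following Notation~\ref{notation}) and this holds for every $g\in K$, the choice of $\delta_0$ and $K$ gives $\|\tilde b\|_2<\eps$. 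Finally, by Lemma~\ref{norms}(\ref{p_2}), $\|b\|_2=\|\tilde b\|_2<\eps$, as desired.
\end{proof}
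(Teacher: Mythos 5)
Your proof is correct and follows essentially the same route as the paper's: apply Lemma~\ref{small-vec} to get $(\delta_0,K)$, shrink $\delta$ so that $\delta+\sqrt{2\delta}\le\delta_0$, pass from $b$ to $\tilde b$ via Lemma~\ref{inv-mean}, and conclude with $\|b\|_2=\|\tilde b\|_2$. The only difference is that you make the choice of $\delta$ explicit, which is harmless.
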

\begin{proof}
%Without loss of generality, one may assume that $b_i \in \Comp [G]$, $i=1, 2, ..., n$.
Let $\delta_0>0$ and $K \subseteq G$ denote the constant and finite set provided by Lemma \ref{small-vec} with respect to $\eps$. Pick $\delta> 0$ such that $ \delta + \sqrt{2\delta} \leq \delta_0 $.
With $\tilde{b}$ as \ref{notation}, it follows from Lemma \ref{inv-mean} that, if
$$\norm{gb - b\pi(g)}_{2} < \delta,\quad g\in K,$$ then 
$$\norm{g\tilde{b} - \tilde{b}}_2 < \delta + \sqrt{2\delta} \leq  \delta_0,\quad  g\in K. $$
By the choice of $\delta_0$ and $K$, it follows that $$\norm{b}_2 = \|\tilde{b}\|_2 < \eps,$$ as desired.
\end{proof}

Recall that a \emph{mean} on a countable discrete group $G$ is a positive linear functional $m$ on $l^{\infty}(G)$ with $m(1) = 1$.
Let $e$ be the neutral element of $G$. It is easy to check that the map $d_e \colon l^{\infty}(G) \rightarrow \mathbb{C}$ defined by $d_e(f) = f(e)$, $f \in l^{\infty}(G)$, 
is always a mean, which is called the \emph{trivial mean}.

If $\xi$ is a function on $G$ and $g$ is an element of $G$, define $g \xi g^{-1}$ to be the function 
\[
(g \xi g^{-1})(x) = \xi(g^{-1}xg),\quad x \in G.
\]
\begin{defn}\label{def-inn-am}
(See \cite{Effros}.) A countable discrete group $G$ is said to be \emph{inner amenable} if there is a nontrivial inner invariant mean $m$, in the sense that
\[
m(g \xi g^{-1}) = m( \xi), \quad \xi \in l^{\infty}(G) \text{\,\,and\,\,}  g \in G.
\]
\end{defn}

The following lemma is surely well known. A proof is included for the reader's convenience.

\begin{lem}\label{small-d}
Let $G$ be a countable discrete group which is not inner amenable.
Let $1_e$ denote the identity of $\Comp [G]$. For any $\eps>0$, there are $\delta>0$ and a finite set $K \subseteq G$ such that if $\xi \in \mathrm{C}^\ast_{\mathrm{red}}(G)$ satisfies
$$\norm{g \xi g^{-1} - \xi}_2 < \delta,\quad g\in K,$$
then $\norm{\xi - \tau(\xi)1_e}_2 < \eps.$ 
\end{lem}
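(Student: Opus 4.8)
The plan is to argue by contradiction, in direct parallel with the proof of Lemma \ref{small-vec}, but working in $l^2(G)$ with the \emph{conjugation} action of $G$ instead of the left-translation action, and using inner amenability in place of amenability. Suppose the statement fails. Then there is $\eps_0>0$ and, for an increasing exhaustion $G=\bigcup_n K_n$ by finite sets, elements $\xi_n\in\mathrm{C}^\ast_{\mathrm{red}}(G)$ with $\norm{g\xi_n g^{-1}-\xi_n}_2<1/n$ for all $g\in K_n$ but $\norm{\xi_n-\tau(\xi_n)1_e}_2\ge\eps_0$. Replacing $\xi_n$ by $\xi_n-\tau(\xi_n)1_e$ (which only decreases the left-hand side, since conjugation fixes $1_e$ and preserves $\tau$) we may assume $\tau(\xi_n)=0$ and $\norm{\xi_n}_2\ge\eps_0$. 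Viewing $\xi_n$ as a vector in $l^2(G)$ via the GNS construction for $\tau$, normalize to get unit vectors $\eta_n=\norm{\xi_n}_2^{-1}\xi_n$, which satisfy $\norm{g\eta_n g^{-1}-\eta_n}_2\to 0$ uniformly on finite sets, and each $\eta_n$ is supported on $G\setminus\{e\}$, i.e.\ $\langle\eta_n,\delta_e\rangle=0$.

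The next step is to convert this almost-invariant sequence for the conjugation action on $l^2(G\setminus\{e\})$ into a nontrivial inner invariant mean, which contradicts non-inner-amenability. The standard device: form the states $\omega_n(f)=\langle f\eta_n,\eta_n\rangle$ on $l^\infty(G)$, where $l^\infty(G)$ acts on $l^2(G)$ by pointwise multiplication; equivalently $\omega_n(f)=\sum_{x\in G}f(x)\abs{\eta_n(x)}^2$, so $\omega_n$ is given by the probability measure $\abs{\eta_n}^2$ on $G$. Let $m$ be a weak-$*$ cluster point of $(\omega_n)$ in the dual of $l^\infty(G)$; then $m$ is a mean. Almost-invariance gives, for each fixed $g$, that $\abs{\omega_n(g f g^{-1})-\omega_n(f)}\le \norm{g\eta_n g^{-1}-\eta_n}_2\cdot 2\norm{f}_\infty\to 0$ (using $f(g^{-1}xg)=(g^{-1}fg)(x)$ paired with the action and Cauchy--Schwarz), so $m$ is conjugation-invariant. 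Finally, since each $\eta_n$ is orthogonal to $\delta_e$, we have $\omega_n(\mathbf 1_{\{e\}})=\abs{\eta_n(e)}^2=0$ for all $n$, hence $m(\mathbf 1_{\{e\}})=0$, whereas $m(1)=1$; thus $m\neq d_e$ and $m$ is nontrivial. This contradicts the assumption that $G$ is not inner amenable.

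One technical point that needs care, and which I expect to be the main (if minor) obstacle, is the precise relationship between the $\norm{\cdot}_2$-norm on $\mathrm{C}^\ast_{\mathrm{red}}(G)$ used in the statement and the $l^2(G)$-norm: by definition $\norm{\xi}_2=\tau(\xi^*\xi)^{1/2}$, which under the GNS map $\xi\mapsto\hat\xi\in l^2(G)$ with $\hat\xi=\sum_\gamma\xi(\gamma)\delta_\gamma$ is exactly the $l^2$-norm, and conjugation $\xi\mapsto g\xi g^{-1}$ corresponds under this map to the unitary $\delta_x\mapsto\delta_{gxg^{-1}}$ on $l^2(G)$; so the translation of hypotheses is clean. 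The other thing to check is that passing to a weak-$*$ limit is legitimate (the unit ball of $l^\infty(G)^*$ is weak-$*$ compact, so cluster points exist) and that conjugation-invariance and the vanishing at $e$ both survive the limit (the first because for fixed $g$ the defect tends to $0$ along the sequence, the second because it is $0$ along the entire sequence). Everything else is the routine computation already sketched; the heart of the matter is recognizing that ``almost-invariant unit vector in $l^2(G\setminus\{e\})$ for conjugation'' is precisely the obstruction that non-inner-amenability rules out, exactly as ``almost-invariant unit vector in $l^2(G)$ for translation'' is ruled out by non-amenability in Lemma \ref{small-vec}.
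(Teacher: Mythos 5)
Your proof is correct and follows essentially the same route as the paper: the identical contradiction setup, with normalization to unit vectors $\eta_n$ satisfying $\eta_n(e)=0$ that are almost invariant under conjugation in $l^2(G)$. The only difference is that the paper then simply cites the main theorem of Effros (noting the i.c.c.\ hypothesis is not needed for the relevant implications) to conclude inner amenability, whereas you prove that implication inline by taking a weak-$*$ cluster point of the probability measures $\abs{\eta_n}^2$ on $G$; your version of that standard argument, including the Cauchy--Schwarz estimate and the observation that $m(\mathbf{1}_{\{e\}})=0$ forces $m\neq d_e$, is sound.
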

\begin{proof}
Assume that the statement were false. 
Choose an increasing sequence of finite subsets $(K_n)$ whose union is $G$. 
Then there is some $\eps_0 > 0$ such that for any $n \in \mathbb{N}$, there is  $\xi_n \in \mathrm{C}^\ast_{\mathrm{red}}(G)$ satisfying
\[
\norm{g \xi_n g^{-1} - \xi_n}_2 < \frac{1}{n},\quad g\in K_n,
\]
but $\|\xi_n - \tau(\xi_n)1_e\|_2 \geq \eps_0$.
Let $\eta_n = \|\xi_n - \tau(\xi_n)1_e\|_2^{-1}(\xi_n - \tau(\xi_n)1_e)$.
Then $\|\eta_n\|_2 = 1$, $\eta_n(e) = 0$, and for any $g\in G$,
\[
\norm{g \eta_n g^{-1} - \eta_n}_2 = \frac{\norm{g \xi_n g^{-1} - \xi_n}_2}{\|\xi_n - \tau(\xi_n)1_e\|_2} \leq \frac{1}{\eps_0 n} 
\rightarrow 0 \quad \text{as\,\,} n \rightarrow \infty.
\]
By the main theorem of \cite{Effros}, this implies that $G$ is inner amenable (the statement of the main theorem of \cite{Effros} assumes that $G$ is i.c.c., but the proof of (2)$\Rightarrow$(3)$\Rightarrow$(4) of the main theorem of \cite{Effros}  does not use this assumption), which contradicts the assumption.
\end{proof}

\begin{defn}\label{defn-seminorm}
Consider the C*-algebra $\mathrm{M}_{1+n}(\mathrm{C}^\ast_{\mathrm{red}}(G))$, where $G$ is a discrete group and $n$ is a natural number, and consider the state $$\rho: \mathrm{M}_{1+n}(\mathrm{C}^\ast_{\mathrm{red}}(G)) \ni \left(\begin{array}{cc} a & b \\ c & d \end{array} \right) \mapsto \tau(a) \in\Comp,$$ where $\tau$ is the canonical trace of $\mathrm{C}^\ast_{\mathrm{red}}(G)$. Define the seminorm $\norm{\cdot}_\rho^\#$ of $\mathrm{M}_{1+n}(\mathrm{C}^\ast_{\mathrm{red}}(G))$ by $$\norm{x}_\rho^\# = (\frac{\rho(xx^*) + \rho(x^*x)}{2})^{\frac{1}{2}} = (\frac{\tau(aa^*) + \tau(bb^*) + \tau(a^*a) + \tau(c^*c)}{2})^{\frac{1}{2}},$$ if $x = \left(\begin{array}{cc} a & b \\ c & d \end{array} \right) \in \mathrm{M}_{1+n}(\mathrm{C}^\ast_{\mathrm{red}}(G)).$
Note that $\norm{\cdot}_\rho^\# \leq \norm{\cdot}_{\mathrm{red}}$.
\end{defn}

\begin{cor}\label{diag}
Let $G$ be a countable discrete group which is not inner amenable.
For any $\eps>0$, there exist $\delta>0$ and a finite set $K \subseteq G$ such that 
%for any unitary representation $\pi: G \to \mathrm{M}_{n}(\Comp)$ and 
for any element $$u=\left(\begin{array}{cc} a & b \\ c & d \end{array} \right) \in \mathrm{M}_{1+n}(\mathrm{C}^\ast_{\mathrm{red}}(G))$$ with $a\in \mathrm{C}^\ast_{\mathrm{red}}(G)$ which satisfies $\norm{u}_{\mathrm{red}}\leq 1$, if for each $g\in K$, there is a matrix $\pi(g) \in \mathrm{M}_{n}(\Comp)$ with norm at most $1$ such that 
\begin{equation}\label{am-comm}
\norm{ \left[\left(\begin{array}{cc} a & b \\ c & d \end{array} \right), \left(\begin{array}{cc} g & 0 \\ 0 & \pi(g) \end{array} \right)\right] }_{\rho}^\#< \delta, \quad  g\in K,
\end{equation} 
then 
$$\norm{b}_2, \norm{c}_2 < \eps$$ and
$$\norm{a-\tau(a)1_e}_2<\eps.$$
\end{cor}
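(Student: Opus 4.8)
The plan is to unwind the hypothesis \eqref{am-comm} into separate estimates on the four corners of $u$, apply Corollary \ref{small-offd} to control $b$ and $c$, and apply Lemma \ref{small-d} to control $a - \tau(a)1_e$. First I would note that the commutator in \eqref{am-comm} has, in $\mathrm{M}_{1+n}(\mathrm{C}^\ast_{\mathrm{red}}(G))$, the block form
\[
\left[\left(\begin{array}{cc} a & b \\ c & d \end{array}\right), \left(\begin{array}{cc} g & 0 \\ 0 & \pi(g) \end{array}\right)\right]
= \left(\begin{array}{cc} ga - ag & b\,\pi(g) - g b \\ \pi(g) c - c g & d\,\pi(g) - \pi(g) d\end{array}\right),
\]
and that, by the definition of $\norm{\cdot}_\rho^\#$ in Definition \ref{defn-seminorm}, the value $\norm{[\,\cdot\,,\,\cdot\,]}_\rho^\#$ dominates $\norm{ga-ag}_2$, $\norm{gb - b\pi(g)}_2$, and $\norm{cg - \pi(g)c}_2$ (the last up to passing to adjoints, which is harmless since $\norm{x}_2 = \norm{x^*}_2$ for the trace $\tau$). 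Concretely, from \eqref{am-comm} one reads off $\norm{gb - b\pi(g)}_2 < \sqrt{2}\,\delta$, $\norm{c^* g^{-1} - \pi(g)^* c^*}_2 < \sqrt{2}\,\delta$ (rewriting the $(2,1)$-corner bound after taking adjoints, so that $c^*$ sits in $\mathrm{M}_{1,n}$ and $\pi(g)^*$ plays the role of the matrix of norm $\le 1$), and $\norm{ga - ag}_2 < \sqrt{2}\,\delta$.

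Next I would treat $b$ and $c$. Since $\norm{u}_{\mathrm{red}} \le 1$, Lemma \ref{pre-est-2} gives $\norm{b}_{\mathrm{red}} \le 1$ and $\norm{c}_{\mathrm{red}} = \norm{c^*}_{\mathrm{red}} \le 1$. Given the target $\eps$, let $\delta_1 > 0$ and $K_1 \subseteq G$ be the data produced by Corollary \ref{small-offd} for this $\eps$; I would require $\sqrt{2}\,\delta \le \delta_1$. Then the bound $\norm{gb - b\pi(g)}_2 < \sqrt{2}\,\delta \le \delta_1$ for all $g \in K \supseteq K_1$ yields $\norm{b}_2 < \eps$ directly, and the analogous bound for $c^*$ against the matrices $\pi(g)^*$ (still of norm $\le 1$) yields $\norm{c^*}_2 = \norm{c}_2 < \eps$. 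For $a$, I would observe that $g(ag^{-1}) - (ag^{-1})g \cdot g \cdot g^{-1}$... more cleanly: $\norm{gag^{-1} - a}_2 = \norm{(ga - ag)g^{-1}}_2 = \norm{ga - ag}_2 < \sqrt{2}\,\delta$ using that right multiplication by the unitary $g^{-1}$ preserves $\norm{\cdot}_2$. Let $\delta_2 > 0$ and $K_2 \subseteq G$ be the data from Lemma \ref{small-d} for $\eps$, and require $\sqrt{2}\,\delta \le \delta_2$; then for $g \in K \supseteq K_2$ we get $\norm{gag^{-1} - a}_2 < \delta_2$, hence $\norm{a - \tau(a)1_e}_2 < \eps$.

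Finally I would assemble the constants: take $K = K_1 \cup K_2$ and $\delta = \min\{\delta_1, \delta_2\}/\sqrt{2}$; with these choices all three conclusions follow from the three bracketed steps above. I do not expect a serious obstacle here — the lemma is essentially bookkeeping that routes each matrix corner to the appropriate previously-established result. The one point requiring a little care is the handling of the $(2,1)$-corner: one must pass to adjoints so that the relation $\pi(g)c - cg \approx 0$ becomes $c^*g^{-1} - \pi(g)^*c^* \approx 0$ in the shape $g'\,c^* - c^*\,\pi'(g') \approx 0$ demanded by Corollary \ref{small-offd} (with $g' = g^{-1}$ ranging over $K_1^{-1}$, so one should in fact symmetrize and take $K$ to also contain $K_1^{-1}$ and $K_2^{-1}$, or simply note that the hypothesis sets in those lemmas may be enlarged to be symmetric without loss). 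Modulo this symmetrization of the finite sets, the proof is immediate.
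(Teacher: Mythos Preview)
Your proof is correct and follows essentially the same route as the paper: compute the commutator in block form, extract the three relevant $\norm{\cdot}_2$ bounds from the definition of $\norm{\cdot}_\rho^\#$, then feed the $(1,1)$ corner into Lemma \ref{small-d} and the off-diagonal corners into Corollary \ref{small-offd} (after invoking Lemma \ref{pre-est-2} for the operator-norm bounds on $b$ and $c$). Your choice $K=K_1\cup K_2$ and $\delta=\min\{\delta_1,\delta_2\}/\sqrt{2}$ matches the paper's.

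You are in fact more explicit than the paper on two points it leaves implicit: the passage to adjoints needed to put the $(2,1)$-corner estimate into the row-vector form required by Corollary \ref{small-offd}, and the resulting need to enlarge $K$ to contain $K_1^{-1}$. One minor slip: the adjoint of $cg-\pi(g)c$ is $g^{-1}c^*-c^*\pi(g)^*$, not $c^*g^{-1}-\pi(g)^*c^*$ as written midway through; your final-paragraph formulation $g'\,c^*-c^*\,\pi'(g')$ with $g'=g^{-1}$ is the correct one, so the argument goes through.
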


\begin{proof}
%Since $\Gamma$ has the Property (T), there is a finite set $K\subseteq \Gamma$ and $\delta'>0$ such that if $\theta: \Gamma\to U(\mathcal H)$ is a representation and $\xi\in\mathcal H$ with $\norm{H} = 1$ satisfies 
%$$\norm{\theta(g)(\xi) - \xi} < \delta',\quad g\in K,$$ then $\theta$ has an invariant vector. The the constant $\delta := \eps/\delta'$ satisfies the conclusion of the lemma.
Applying Corollary \ref{small-offd} and and Lemma \ref{small-d} to $\eps$, one obtains ($\delta_1$, $K_1$) and ($\delta_2$, $K_2$) respectively. Set $\delta=\frac{1}{\sqrt{2}}\min\{\delta_1, \delta_2\}$ and $K = K_1 \cup K_2$.

Let $\left(\begin{array}{cc} a & b \\ c & d \end{array} \right) \in \mathrm{M}_{1+n}(\mathrm{C}^\ast_{\mathrm{red}}(G))$ satisfy the assumption for this choice of $\delta$ and $K$. It follows from \eqref{am-comm} that
\begin{equation}
\norm{ \left(\begin{array}{cc} ag - ga & b\pi(g) - gb \\ cg - \pi(g)c & d\pi(g) - \pi(g) d \end{array} \right) }_{\rho}^\#< \delta, \quad g\in K,
\end{equation} 
and hence, by the definition of $\norm{\cdot}_\rho^\#$, one has that for any $g\in K$,
\begin{eqnarray}
\norm{ga-ag}_2 & < & \delta_2,\label{comm0} \\
\norm{gb - b\pi(g)}_2 & < & \delta_1,\quad\mathrm{and} \label{comm1}\\
\norm{cg- \pi(g) c}_2 & < & \delta_1. \label{comm2}
%\norm{\pi(g) d - d \pi(g)} & < & \delta.
\end{eqnarray}

Applying Lemma \ref{small-d} to \eqref{comm0}, one obtains 
$$\norm{a - \tau(a)1_e}_2<\eps.$$

For the estimates on $b$ and $c$, since $\norm{u}_{\mathrm{red}}\leq 1$, by Lemma \ref{pre-est-2}, one has $\|b\|_{\mathrm{red}} \leq 1$ and $\|c\|_{\mathrm{red}} \leq 1$. 
With the choice of $\delta$ and $K$, by \eqref{comm1} and \eqref{comm2}, it follows from Corollary \ref{small-offd} that $$\norm{b}_2<\eps\quad\mathrm{and}\quad\norm{c}_2<\eps,$$
as desired.
\end{proof}

Recall that in D\u{a}d\u{a}rlat's construction (\ref{construction}), the C*-algebra $A$ is the direct limit of $\mathrm{M}_{k_i}(D)$ with the connecting maps
$$a\mapsto \mathrm{diag}(a, \pi_i (a)),$$
and there is a surjective homomorphism $\theta: D \to \mathrm{C}^\ast_{\mathrm{red}}(G)$. Consider the state of $A$ defined by $$\rho_\theta((a_{jk})) = \tau(\theta(a_{11})),\quad (a_{jk}) \in \mathrm{M}_{k_i}(D)$$
where $\tau$ is the canonical trace of $\mathrm{C}^\ast_{\mathrm{red}}(G)$. Similar to Definition \ref{defn-seminorm}, consider the seminorm
$$\norm{a}_{\rho_\theta}^\# := (\frac{\rho_\theta(aa^*) + \rho_\theta(a^*a)}{2})^\frac{1}{2},\quad a\in A.$$

Note that the successive connecting maps $D \rightarrow \mathrm{M}_{k_i}(D)$ always have the form
\begin{equation} \label{embedding}
a \rightarrow \mathrm{diag} (a, \pi(a)),
\end{equation}
where $\pi: D \to \mathrm{M}_{k_i-1}(\Comp 1_D) \subseteq \mathrm{M}_{k_i-1}(D)$ is a finite-dimensional representation of $D$. This induces an embedding of $D$ into $A$. We shall identify $D$ as a sub-C*-algebra of $A$ via this embedding. 

\begin{prop}\label{non-Gamma}
Let $G$ be a countable discrete group which is not inner amenable, and let $D$ be a separable unital RFD algebra  such that $\mathrm{C}^\ast_{\mathrm{red}}(G)$ is a quotient of $D$. Let $A$ be the TAF algebra constructed from $D$ and let $\rho_\theta$ be the state described as above. 

For any $g \in G$, pick an element $\check{g}$ of $D$ with norm 1 which lifts $u_g$, and regard $\check{g}$ as an element of $A$ via the embedding induced by the maps \eqref{embedding}. Then, for any $\eps>0$, there exist $\delta>0$ and a finite set $K \subseteq G$ such that if $u\in A$ is a unitary satisfying $$\norm{u\check{g}-\check{g}u}_{\rho_\theta}^\# < \delta,\quad g\in K,$$ then $$|\rho_\theta(u)| > 1-\eps.$$
In particular, since $\norm{\cdot}_{\rho_\theta}^\# \leq \norm{\cdot}$, the TAF algebra $A$ does not have Property $\Gamma$ with respect to $\rho_\theta$.
\end{prop}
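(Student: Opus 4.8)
The plan is to transport the hypothesis from the inductive limit $A$ down to a single finite stage $\mathrm{M}_{k_i}(D)$, push it through $\theta$ to land in a matrix algebra over $\mathrm{C}^\ast_{\mathrm{red}}(G)$, and then invoke Corollary \ref{diag}; all of the genuine content is already contained in that corollary (hence ultimately in the non-inner-amenability of $G$), so what remains is bookkeeping plus one perturbation estimate. Given $\eps>0$, I would first fix a small $\eps''>0$ and let $\delta_0>0$ together with the finite set $K\subseteq G$ be those furnished by Corollary \ref{diag} for $\eps''$; then put $\delta=\delta_0/2$ and fix a small auxiliary $\eps'>0$ with $\eps'<\delta_0/4$ and $2(\eps'')^2+\eps'<\eps$. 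These $\delta$ and $K$ will be the promised data. Now take a unitary $u\in A$ with $\norm{u\check g-\check gu}_{\rho_\theta}^\#<\delta$ for all $g\in K$.

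Since $A$ is the inductive limit of the $\mathrm{M}_{k_i}(D)$ with unital connecting maps, a standard perturbation argument (approximate $u$ by an element of some $\mathrm{M}_{k_i}(D)$, then correct it by polar decomposition) yields a genuine unitary $v\in\mathrm{M}_{k_i}(D)$ with $\norm{u-v}<\eps'$. At this stage $\check g$ is $\mathrm{diag}(\check g,\pi^{(i)}(\check g))\in\mathrm{M}_{k_i}(D)$, where $\pi^{(i)}\colon D\to\mathrm{M}_{k_i-1}(\Comp 1_D)$ is the finite-dimensional representation from \eqref{embedding}. I would then apply $\theta$ entrywise, i.e., the $\ast$-homomorphism $\theta\otimes\mathrm{id}_{\mathrm{M}_{k_i}}\colon\mathrm{M}_{k_i}(D)\to\mathrm{M}_{k_i}(\mathrm{C}^\ast_{\mathrm{red}}(G))$, and set $w=(\theta\otimes\mathrm{id})(v)\in\mathrm{M}_{1+n}(\mathrm{C}^\ast_{\mathrm{red}}(G))$ with $n=k_i-1$, top-left entry $a=\theta(v_{11})\in\mathrm{C}^\ast_{\mathrm{red}}(G)$, and off-diagonal blocks $b,c$. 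Since $\theta$ is unital and $v$ is a unitary, $w$ is a unitary, so $\norm{w}_{\mathrm{red}}=1$; and the image of $\check g$ is $\widehat g=\mathrm{diag}(u_g,\pi^{(i)}(\check g))$, exactly of the shape $\mathrm{diag}(g,\pi(g))$ required in Corollary \ref{diag}, with $\norm{\pi(g)}\le\norm{\check g}=1$. A short computation straight from the definitions shows that $\norm{x}_{\rho_\theta}^\#=\norm{(\theta\otimes\mathrm{id})(x)}_\rho^\#$ for all $x\in\mathrm{M}_{k_i}(D)$, where $\rho$ is the state of Definition \ref{defn-seminorm}; hence for each $g\in K$,
\[
\norm{\bigl[\,w,\widehat g\,\bigr]}_\rho^\#=\norm{v\check g-\check gv}_{\rho_\theta}^\#\le\norm{u\check g-\check gu}_{\rho_\theta}^\#+2\norm{u-v}<\delta+2\eps'<\delta_0 ,
\]
and Corollary \ref{diag} gives $\norm{b}_2,\norm{c}_2<\eps''$ and $\norm{a-\tau(a)1_e}_2<\eps''$.

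To finish, I would read off the $(1,1)$-entry of $ww^*=1$ to get $aa^*+bb^*=1$, so that $\norm{a}_2^2=\tau(aa^*)=1-\norm{b}_2^2>1-(\eps'')^2$; combining this with the identity $\norm{a-\tau(a)1_e}_2^2=\norm{a}_2^2-\abs{\tau(a)}^2$ forces $\abs{\tau(a)}^2>1-2(\eps'')^2$. Since $\rho_\theta(v)=\tau(\theta(v_{11}))=\tau(a)$, we get
\[
\abs{\rho_\theta(u)}\ \ge\ \abs{\tau(a)}-\norm{u-v}\ >\ \sqrt{1-2(\eps'')^2}-\eps'\ \ge\ 1-2(\eps'')^2-\eps'\ >\ 1-\eps .
\]
For the last assertion of the proposition: if $(u_i)$ were a central sequence of unitaries of $A$ with $\abs{\rho_\theta(u_i)}\to0$, then since $\norm{\cdot}_{\rho_\theta}^\#\le\norm{\cdot}$ we would have $\norm{u_i\check g-\check gu_i}_{\rho_\theta}^\#\to0$ for every $g$; taking $\delta,K$ as above for $\eps=\tfrac12$, the estimate just proved would give $\abs{\rho_\theta(u_i)}>\tfrac12$ for all large $i$, a contradiction. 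Hence $A$ does not have Property $\Gamma$ with respect to $\rho_\theta$.

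The step I expect to require the most care is not deep: it is the reduction to a genuine unitary $v$ sitting inside a single $\mathrm{M}_{k_i}(D)$, and the two verifications that $\theta\otimes\mathrm{id}$ intertwines the $\#$-seminorms and carries $\check g$ to a block-diagonal matrix of precisely the form $\mathrm{diag}(g,\pi(g))$ with $\norm{\pi(g)}\le1$ that Corollary \ref{diag} can digest. Everything else is already contained in Corollary \ref{diag} (and, behind it, Effros's characterization of inner amenability), together with the elementary observation that if the $(1,1)$-block $a$ of a unitary in $\mathrm{M}_{1+n}(\mathrm{C}^\ast_{\mathrm{red}}(G))$ is nearly a scalar multiple of the unit, then $\abs{\tau(a)}$ is nearly $1$.
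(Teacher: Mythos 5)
Your proof is correct and follows essentially the same route as the paper: reduce to a finite stage $\mathrm{M}_{k_i}(D)$, push through $\theta$, apply Corollary \ref{diag}, and convert $\norm{\theta(a)-\tau(\theta(a))1_e}_2$ small into $\abs{\tau(\theta(a))}$ close to $1$. The only (harmless) variation is that you correct the approximant to a genuine unitary $v$ by polar decomposition so that $aa^*+bb^*=1$ holds exactly, whereas the paper keeps $v$ merely of norm one and works with the approximate identity $\norm{(aa^*+bb^*)-1_D}<\eps_0$; this slightly cleans up the closing arithmetic but changes nothing of substance.
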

\begin{proof}
Let $\eps>0$ be arbitrary. Choose $\eps_0 > 0$ small enough so that 
\[
\sqrt{1-\eps_0 -2 \eps_0^2} - \eps_0
> 1 - \eps.
\] Let $\delta_0 > 0$ and $K \subseteq G$ be the constant and finite subset provided by Corollary \ref{diag} with $\eps_0$ in the place of $\eps$. Set
\[
\delta = \min \{ \frac{\delta_0}{3}, \frac{\eps_0}{2}\}.
\]
Let $u$ be a unitary in $A$ satisfying $$\|u\check{g}-\check{g}u\|_{\rho_\theta}^\# < \delta,\quad g \in K.$$ By the construction of $A$, there is $n \in \mathbb{N}$ and 
$$v=\left(\begin{array}{cc} a & b \\ c & d \end{array}\right) \in \mathrm{M}_{1+n}(D)\subseteq A$$ with $a\in D$, $\norm{v}=1$, and 
$\|v - u\| < \delta$. 

Note that $\check{g}$ is identified with $\left(\begin{array}{cc} \check{g} & 0 \\ 0 & \pi({\check{g})} \end{array}\right) \in \mathrm{M}_{1+n}(D) \subseteq A,
$
where $\pi(\check{g})$ is a scalar matrix of norm at most 1; so $\theta(\pi(\check{g})) = \pi(\check{g})$.
Since
\begin{eqnarray*}
\norm{ \left[\left(\begin{array}{cc} \theta(a) & \theta(b) \\ \theta(c) & \theta(d) \end{array} \right), \left(\begin{array}{cc} g & 0 \\ 0 & \pi(\check{g}) \end{array} \right)\right] }_{\rho}^\#
& = &\|\theta(\check{g}v -v\check{g})\|_{\rho}^\# = \|\check{g}v-v\check{g}\|_{\rho_\theta}^\# \\
&\leq &\|\check{g}u-u\check{g}\|_{\rho_\theta}^\# + 2\|u - v\| < 3\delta \leq \delta_0, 
\end{eqnarray*}
for $g \in K$, 
by the choice of $\delta_0$ and $K$, it follows from Corollary \ref{diag} that
\begin{equation}\label{eq-sm-1}
\norm{\theta(b)}_2, \norm{\theta(c)}_2 < \eps_0 
\end{equation} and
\begin{equation}\label{eq-sm-2}
\norm{\theta(a)-\tau(\theta(a))1_e}_2<\eps_0.
\end{equation}

Since $\|u -v\| < \delta \leq {\eps_0}/{2}$ and $u$ is a unitary, one has
$$
\norm{(aa^*+bb^*)-1_D}
\leq \norm{vv^* - 1_{\mathrm{M}_{1+n}(D)}}
\leq \norm{uu^* - 1_A} + 2 \|v-u\|
< \eps_0.
$$
Hence by \eqref{eq-sm-1}, \begin{equation}\label{eq-sm-3}\tau(\theta(aa^*))\geq 1 - \eps_0-\tau(\theta(bb^*)) > 1-\eps_0-\eps_0^2.
\end{equation}

On the other hand, write $a=\lambda1_D + a_0$ with $\lambda = \tau(\theta(a))$ and $a_0=a-\tau(\theta(a))1_D$. Then
$$
aa^*  =  (\lambda 1_D + a_0) (\bar{\lambda} 1_D + a^*_0) 
 =  \abs{\lambda}^2 1_D + \lambda a^*_0 + \bar{\lambda} a_0 + a_0a_0^*.
$$
Applying the quotient map $\theta$ and the trace $\tau$ on both sides, by \eqref{eq-sm-2}, we have
$$\tau(\theta(aa^*)) = \abs{\lambda}^2 + \tau(\theta(a_0a_0^*)) = \abs{\lambda}^2 + \norm{\theta(a)-\tau(\theta(a))1_e}_2^2 < \abs{\lambda}^2 + \eps_0^2.$$
Together with \eqref{eq-sm-3}, we have $$\abs{\lambda}^2 > 1-\eps_0-2\eps_0^2,$$
and therefore
$$
|\rho_\theta(u)| \geq |\rho_\theta(v)| - \eps_0
= |\tau(\theta(a))| - \eps_0 > \sqrt{1-\eps_0 -2 \eps_0^2} - \eps_0 > 1 - \eps,
$$
as desired.
\end{proof}

As a simple corollary,  all unitary central sequences in the von Neumann algebra generated by $A$ under the GNS representation associated to $\rho_\theta$ are trivial:
\begin{cor}
Consider the GNS representation of $A$ associated to the state $\rho_\theta$, and consider the von Neumann algebra $A''_{\rho_\theta}$. Then, if there is a sequence of unitaries $(u_n) \subseteq A''_{\rho_\theta}$ such that for any $x\in A_{\rho_\theta}''$,
$[u_n, x] \to 0$ in the strong* topology as $n\to\infty$, one has 
$$(u_n - \rho_\theta(u_n)1_A) \to 0,\quad n\to\infty,$$ in the strong* operator topology.
\end{cor}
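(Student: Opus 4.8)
The plan is to reduce the statement to Proposition \ref{non-Gamma} by approximating central unitaries of the von Neumann algebra by unitaries of $A$. Write $\xi$ for the cyclic GNS vector of $\rho_\theta$ (a unit vector, since $\rho_\theta$ is a state), set $M = A''_{\rho_\theta}$, and let $\rho_\theta$ also denote the normal extension $x \mapsto \langle x\xi, \xi\rangle$ of $\rho_\theta$ to $M$; then $\norm{\cdot}_{\rho_\theta}^\#$ is defined on all of $M$, and a bounded net converging to $0$ strong* also converges to $0$ in $\norm{\cdot}_{\rho_\theta}^\#$. Put $y_n = u_n - \rho_\theta(u_n)1$, so $\norm{y_n} \le 2$. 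Since $\xi$ is cyclic for $\pi_{\rho_\theta}(A)$, it suffices to prove $y_n(a\xi) \to 0$ and $y_n^*(a\xi) \to 0$ for every $a \in A$. As scalars are central, $[y_n, a] = [u_n, a]$ and $[y_n^*, a] = [u_n^*, a]$, so $y_n(a\xi) = a(y_n\xi) + [u_n,a]\xi$ and $y_n^*(a\xi) = a(y_n^*\xi) + [u_n^*,a]\xi$; the hypothesis gives $[u_n, a]\xi \to 0$ directly, and $[u_n^*, a]\xi = -[u_n, a^*]^*\xi \to 0$ from the strong*-convergence of $[u_n, a^*]$. Thus everything reduces to $y_n\xi \to 0$ and $y_n^*\xi \to 0$, and a direct computation using $u_n^*u_n = u_nu_n^* = 1$, $\norm{\xi} = 1$, and $\langle u_n\xi, \xi\rangle = \rho_\theta(u_n)$ gives
\[
\norm{y_n\xi}^2 = \norm{y_n^*\xi}^2 = 1 - \abs{\rho_\theta(u_n)}^2 .
\]
So it remains only to show $\abs{\rho_\theta(u_n)} \to 1$.

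To prove $\abs{\rho_\theta(u_n)} \to 1$, fix $\eps > 0$ and let $\delta > 0$ and the finite set $K \subseteq G$ be as furnished by Proposition \ref{non-Gamma} for $\eps$; let $\check g \in D \subseteq A$ ($g \in K$) be the norm-one lifts of $u_g$ appearing there. For each $g \in K$ the hypothesis gives $[u_n, \check g] \to 0$ strong*, hence $\norm{u_n\check g - \check g u_n}_{\rho_\theta}^\# \to 0$; choose $N$ so that this quantity is $< \delta/2$ for all $g \in K$ once $n \ge N$. Fix $n \ge N$. Every unitary of a von Neumann algebra is of the form $e^{ih}$ with $h = h^*$ and $\norm{h} \le \pi$; by Kaplansky's density theorem there are self-adjoint $h_k \in A$ with $\norm{h_k} \le \pi$ and $h_k \to h$ strong*, and then $v_k := e^{ih_k} \in U(A)$ converges to $u_n$ strong* by joint strong*-continuity of multiplication on bounded sets (polynomial approximation of the functional calculus). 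Since the values $\norm{v\check g - \check g v}_{\rho_\theta}^\#$, $g \in K$, and $\rho_\theta(v) = \langle v\xi, \xi\rangle$ are strong*-continuous in $v$ on the unit ball, we may pick $v := v_k$ for $k$ large so that $\norm{v\check g - \check g v}_{\rho_\theta}^\# < \delta$ for all $g \in K$ and $\abs{\rho_\theta(v) - \rho_\theta(u_n)} < \eps$. Proposition \ref{non-Gamma} applied to the unitary $v \in A$ yields $\abs{\rho_\theta(v)} > 1 - \eps$, and hence $\abs{\rho_\theta(u_n)} > 1 - 2\eps$ for all $n \ge N$. As $\eps > 0$ was arbitrary and $\abs{\rho_\theta(u_n)} \le 1$, we conclude $\abs{\rho_\theta(u_n)} \to 1$, which finishes the proof.

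The one genuine difficulty is the passage from unitaries of $A$ --- where Proposition \ref{non-Gamma} is stated --- to unitaries of $M$, where the central sequence lives; this is handled by the exponential-plus-Kaplansky-density approximation in the second paragraph, which is legitimate because $A$ is separable (so $H_{\rho_\theta}$ is separable and the strong* topology is metrizable on the unit ball) and because the $\#$-seminorms of the relevant commutators and the functional $\rho_\theta$ are strong*-continuous there. The remaining estimates are routine.
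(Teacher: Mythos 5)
Your proof is correct and follows essentially the same route as the paper's: both reduce the problem to Proposition \ref{non-Gamma} by approximating the $u_n$ with unitaries of $A$ via the Kaplansky density theorem, deduce $\abs{\rho_\theta(u_n)}\to 1$, and then upgrade the convergence at the cyclic vector to strong* convergence using the commutator hypothesis. The only differences are cosmetic (you compute $\norm{(u_n-\rho_\theta(u_n)1)\xi}^2 = 1-\abs{\rho_\theta(u_n)}^2$ exactly and reorder the reduction steps, and you spell out the exponential argument behind the density of $U(A)$ in $U(A''_{\rho_\theta})$, which the paper simply cites).
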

\begin{proof}
Since the unitary group of $A$ is strongly* dense in the unitary group of $A_{\rho_\theta}''$ (Kaplansky Density Theorem), there is a sequence of unitaries $(w_n)\subseteq A$ such that $(u_n-w_n) \to 0$ strongly* as $n\to\infty$. Let $x\in A_{\rho_\theta}''$.  Since $[u_n, x] \to 0$ in the strong* topology as $n\to\infty$, one has
\begin{equation*}
\norm{(u_nx - xu_n)(\xi_{1_A})}_{\rho_\theta} + \norm{(u_nx - xu_n)^*(\xi_{1_A})}_{\rho_\theta}\to 0\quad \text{as\,\,} n\to\infty,
\end{equation*}
and hence
\begin{equation*}
\norm{(w_nx - xw_n)(\xi_{1_A})}_{\rho_\theta} + \norm{(w_nx - xw_n)^*(\xi_{1_A})}_{\rho_\theta}\to 0 \quad \text{as\,\,} n\to\infty.
\end{equation*} 
That is,
$$\rho_\theta((w_nx - xw_n)^*(w_nx - xw_n)) + \rho_\theta((w_nx - xw_n)(w_nx - xw_n)^*) \to 0 \quad \text{as\,\,} n\to\infty,$$ and 
$$\norm{w_nx - xw_n}_{\rho_\theta}^\# \to 0 \quad \text{as\,\,} n\to\infty.$$ 
Since $(w_n)\subseteq A$, by Proposition \ref{non-Gamma}, $$\abs{\rho_\theta(w_n)} \to 1 \quad \text{as\,\,} n\to\infty,$$
and this implies
\begin{eqnarray*}
&& \rho_\theta((w_n-\rho_\theta(w_n)1_A)^*(w_n-\rho_\theta(w_n)1_A)) \\
& = & \rho_\theta(w_n^*w_n - \rho_\theta(w_n)w_n^* -\overline{\rho_\theta(w_n)}w_n +\abs{\rho_\theta(w_n)}^21_A) \\
& = & \rho_\theta(1_A) - \abs{\rho_\theta(w_n)}^2 \to 0 \quad \text{as\,\,} n\to\infty.
\end{eqnarray*}
The same calculation also shows  
$$\rho_\theta((w_n-\rho_\theta(w_n)1_A)(w_n-\rho_\theta(w_n)1_A)^*) \to 0 \quad \text{as\,\,} n\to\infty.$$ Therefore
$$\norm{w_n-\rho_\theta(w_n)1_A}_{\rho_\theta}^\# \to 0 \quad \text{as\,\,} n\to\infty,$$ and since $(w_n-u_n) \to 0$ strongly* as $n\to\infty$,
$$\norm{u_n-\rho_\theta(u_n)1_A}_{\rho_\theta}^\# \to 0 \quad \text{as\,\,} n\to\infty.$$

Let $x_1, x_2\in A$ be arbitrary. Set $M = \max \{\norm{x_1}, \norm{x_2}\}$. Then, since $[u_n-\rho_\theta(u_n)1_A, x] \to 0$ strongly* as $n\to\infty$, for any $x\in A$, one has

\begin{eqnarray*}
& &\limsup_{n\to\infty} \left( \norm{(u_n-\rho_\theta(u_n)1_A)x_1}_{\rho_\theta} + \norm{(u_n-\rho_\theta(u_n)1_A)^*x_2}_{\rho_\theta} \right)\\
& \leq & \limsup_{n\to\infty} \left(\norm{x_1} \norm{(u_n-\rho_\theta(u_n)1_A)}_{\rho_\theta} + \norm{x_2} \norm{(u_n-\rho_\theta(u_n)1_A)^*}_{\rho_\theta} \right) \\
& \leq & \limsup_{n\to\infty} M \left(\norm{(u_n-\rho_\theta(u_n)1_A)}_{\rho_\theta} +  \norm{(u_n-\rho_\theta(u_n)1_A)^*}_{\rho_\theta} \right) \\
& \leq & \limsup_{n\to\infty} 2 \sqrt{2} M \left(\norm{(u_n-\rho_\theta(u_n)1_A)}_{\rho_\theta}^{\#} \right) \\
& = & 0.
\end{eqnarray*}

%\begin{eqnarray*}
%& &\limsup_{n\to\infty} (\norm{(u_n-\rho_\theta(u_n)1_A)x_1}_{\rho_\theta} + \norm{(u_n-\rho_\theta(u_n)1_A)^*x_2}_{\rho_\theta} )\\
%& =& \limsup_{n\to\infty} (\norm{x_1(u_n-\rho_\theta(u_n)1_A)}_{\rho_\theta} + \norm{x_2(u_n-\rho_\theta(u_n)1_A)}_{\rho_\theta}) \\
%& = & \limsup_{n\to\infty}\rho_\theta((u_n-\rho_\theta(u_n)1_A)^*x_1^*x_1 (u_n-\rho_\theta(u_n)1_A) + \rho_\theta((u_n-\rho_\theta(u_n)1_A)x_2^*x_2 (u_n-\rho_\theta(u_n)1_A)^*) \\
%& \leq & \limsup_{n\to\infty}(\norm{x_1}^2\rho_\theta((u_n-\rho_\theta(u_n)1_A)^* (u_n-\rho_\theta(u_n)1_A) + \norm{x_2}^2\rho_\theta((u_n-\rho_\theta(u_n)1_A) (u_n-\rho_\theta(u_n)1_A)^*)) \\
%& = & 0.
%\end{eqnarray*}
Thus,  $$(u_n - \rho_\theta(u_n) 1_A) \to 0 \quad \text{as\,\,} n\to\infty$$ in the strong* operator topology.
\end{proof}

\subsection{$\mathcal Z$-absorbing C*-algebras have Property $\Gamma$}
Let us show that if a C*-algebra is $\mathcal Z$-absorbing, then it has Property $\Gamma$ (in the sense of Definition \ref{prop-gamma}) with respect to any given state (Corollary \ref{Gamma}).
\begin{prop}\label{Z-unitary}
Let $p, q \in \mathbb{N}$ be prime numbers and let
$Z_{p, q}$ be the dimension drop algebra. Let $\rho$ be a state on $Z_{p, q}$. Then, for any $\eps>0$, there is a unitary $u\in Z_{p, q}$ such that $|\rho(u)| <\eps$.
\end{prop}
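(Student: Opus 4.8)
The plan is to use the explicit structure of the dimension drop algebra $Z_{p,q}$ together with the fact that $p$ and $q$ are coprime (being distinct primes, or even just a single prime used twice—though for the argument one really wants $\gcd(p,q)=1$; if $p=q$ the statement as phrased would fail, so I read the hypothesis as $p\neq q$). Recall that
\[
Z_{p,q} = \{f \in \mathrm{C}([0,1], \Mat{pq}) : f(0) \in \Mat{p}\otimes 1_q,\ f(1) \in 1_p \otimes \Mat{q}\}.
\]
A state $\rho$ on $Z_{p,q}$ restricts, via pullback along the point-evaluations, to a family of states; more usefully, $\rho$ is given by integration against a positive $\Mat{pq}$-valued measure on $[0,1]$ (with the endpoint constraints built in). The key point is that a unitary $u \in Z_{p,q}$ is a continuous path of unitaries in $\Mat{pq}$ that at $t=0$ lies in $\mathrm{U}(p)\otimes 1_q$ and at $t=1$ lies in $1_p\otimes \mathrm{U}(q)$. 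The target $|\rho(u)|<\eps$ should be achieved by producing a unitary path that ``winds'' enough that its average against any state nearly cancels.

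The main step I would carry out is the following local-to-global construction. First, partition $[0,1]$ into finitely many small subintervals $I_1,\dots,I_N$ such that $\rho$ restricted to each $I_j$ (more precisely, the total mass of the representing measure on $I_j$) is small, say each piece carries mass at most $\eps/10$—except that the endpoint atoms (if $\rho$ has point masses at $0$ or $1$) must be handled separately. On each subinterval not touching an endpoint, $Z_{p,q}$ looks locally like $\mathrm{C}(I_j,\Mat{pq})$, and there I can freely choose a unitary: I would take a path that runs once around a maximal torus of $\mathrm{U}(pq)$, e.g. $t \mapsto \mathrm{diag}(e^{2\pi i \phi_1(t)},\dots,e^{2\pi i\phi_{pq}(t)})$ with the $\phi_k$ chosen so that the diagonal entries sweep out all $pq$-th roots of unity; then for \emph{any} fixed point $t$ the trace of $u(t)$ is small, and consequently $\rho(u)$ picks up only the small-mass contributions near the endpoints plus vanishing bulk terms. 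At the two endpoints I must instead use a unitary valued in $\mathrm{U}(p)\otimes 1_q$ (resp.\ $1_p\otimes \mathrm{U}(q)$); there I use a windable path in $\mathrm{U}(p)$ (resp.\ $\mathrm{U}(q)$), sweeping $p$-th (resp.\ $q$-th) roots of unity, whose traces are $0$ on the nose. Finally I glue these local pieces continuously: since $\mathrm{U}(pq)$ is connected and the constraints at $0,1$ are into connected subgroups, I can interpolate between consecutive pieces on short transition intervals; by making those transition intervals carry total $\rho$-mass less than $\eps/2$, the resulting global unitary $u\in Z_{p,q}$ satisfies $|\rho(u)|<\eps$.

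I expect the main obstacle to be the bookkeeping at the endpoints together with the endpoint atoms of $\rho$: the representing measure can place positive mass exactly at $t=0$ or $t=1$, where I am forced into the smaller groups $\mathrm{U}(p)$ and $\mathrm{U}(q)$, so I cannot make the mass near those points arbitrarily small—I must instead ensure the unitary \emph{value} there has small (indeed zero) normalized trace, which is exactly why sweeping all $p$-th, resp.\ $q$-th, roots of unity is the right device (the average of all $n$-th roots of unity is $0$). A secondary technical point is verifying that the transition/gluing intervals can be taken short enough in $\rho$-measure; this follows from regularity of the representing measure (only finitely many atoms of mass $\geq \eps/10$, the rest absolutely distributed), so it is routine once set up. The upshot: by choosing the partition fine relative to $\rho$ and using root-of-unity sweeps in each chamber, $\abs{\rho(u)}$ is bounded by the $\rho$-mass of the transition set plus a uniformly small bulk term, both $<\eps/2$.
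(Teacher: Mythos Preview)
Your argument has a genuine gap: you conflate ``trace zero'' with ``small under the given state $\rho$''. A state $\rho$ on $Z_{p,q}$ disintegrates as $\rho(f)=\int_{[0,1]}\sigma_t(f(t))\,d\mu(t)$ for a probability measure $\mu$ and a measurable family of states $\sigma_t$ on $\Mat{pq}$ (with the endpoint constraints). The fibre states $\sigma_t$ are in general \emph{not} tracial. So choosing $u(t)$ to be a diagonal unitary whose entries run over the $pq$-th roots of unity only forces $\mathrm{tr}(u(t))=0$; it does nothing for $\sigma_t(u(t))$. Concretely, if $\mu$ is a Dirac mass at some interior $t_0$ and $\sigma_{t_0}$ is the vector state at $e_1\in\Comp^{pq}$, then for your diagonal path $\rho(u)=\sigma_{t_0}(u(t_0))=u_{11}(t_0)$, which has modulus $1$ no matter how cleverly the $\phi_k$ are chosen. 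The same objection applies to your endpoint device: a unitary in $\mathrm{U}(p)$ sweeping $p$-th roots of unity has trace $0$, but the state at $0$ is an arbitrary state on $\Mat{p}$, not the trace. Your partition scheme (making each $I_j$ carry mass $\leq\eps/10$) also cannot absorb interior atoms; you note their existence at the end but never explain how the bulk construction handles them.

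What is actually needed is to choose, at each fibre, a unitary adapted to the density of $\sigma_t$: write $\sigma_t(a)=\mathrm{tr}(\rho_t a)$, diagonalize $\rho_t$, and take $u(t)$ to be (conjugate to) a unitary with zero diagonal in that basis; then $\sigma_t(u(t))=0$ exactly. This is precisely the paper's mechanism. The paper implements it by passing to the enveloping Borel *-algebra $\mathcal B_{p,q}$, showing $\rho\ll\phi$ for the $\mu$-induced trace $\phi$, taking the Radon--Nikodym derivative $h$, approximating $h$ by a simple function $\bar h=\sum h_i\chi_{E_i}$, diagonalizing each $h_i=u_i^*d_iu_i$, and setting $u'|_{E_i}=u_i^*w_iu_i$ with $w_i$ zero on the diagonal; then $\phi(\bar h u')=0$. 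Finally Kaplansky density plus stable rank one of $Z_{p,q}$ brings this Borel unitary back to a genuine unitary $u\in Z_{p,q}$ with $|\rho(u)|<\eps$. Your root-of-unity sweep is the special case of this when every $\rho_t$ is already scalar; the missing idea is the diagonalization of the (local) density.
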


\begin{proof}
Recall (see, for instance, \cite{Ell-Cre}) that for a pair of natural numbers $p, q$ which are relatively prime, the dimension drop algebra $Z_{p, q}$ is defined as $$Z_{p, q}:=\{f\in\mathrm{C}([0, 1], \mathrm{M}_{pq}(\Comp)): f(0)\in \mathrm{M}_p(\Comp)\otimes 1_q\ \mathrm{and}\ f(1)\in 1_p\otimes\mathrm{M}_q(\Comp)\}.$$ We assert that the enveloping Borel *-algebra of $Z_{p, q}$ is isomorphic to
$$\mathcal B_{p, q}=\{f\in\mathrm{L}^\infty([0, 1], \mathrm{M}_{pq}(\Comp)): f(0)\in \mathrm{M}_p(\Comp)\otimes 1_q\ \mathrm{and}\ f(1)\in 1_p\otimes\mathrm{M}_q(\Comp)\}.$$ Indeed, denote by $\mathcal B$ the enveloping Borel *-algebra of $Z_{p, q}$. Since $\mathcal B_{p, q}$ is a monotonic sequential closure of $Z_{p, q}$, by Theorem 4.5.9 of \cite{Ped-book}, there is a surjective homomorphism from $\mathcal B$ to $\mathcal B_{p, q}$. Suppose there is an element $a\in\mathcal B$ which is sent to $0$ under this map. Then $a$ must be $0$ under all the irreducible representations of $Z_{p, q}$, and hence $a$  must be $0$ by Corollary 4.5.13 of \cite{Ped-book}. Therefore, the surjection from $\mathcal B$ to $\mathcal B_{p, q}$ is an isomorphism.

Let $\rho$ be a state of $Z_{p, q}$. Then $\rho$ can be extended to a normal state of $\mathcal B_{p, q}$, which will still be denoted by $\rho$.
Identify the center of $\mathcal B_{p, q}$ with
$L^{\infty}([0,1])$.
The restriction of $\rho$ to the center of $\mathcal B_{p, q}$ is then induced by a probability Borel measure $\mu$ on $[0, 1]$; that is,
$$\rho(f) = \int_{[0, 1]} f d \mu,\quad f\in L^\infty([0, 1]) = \mathrm{Z}(\mathcal B_{p, q}).$$

Let $\mathrm{tr}$ denote the tracial state of $\mathrm{M}_{pq}(\Comp)$. Define a (normal) trace of $\mathcal B_{p, q}$ by
$$\phi(f)= \int_{[0, 1]} \mathrm{tr}(f(t))   d\mu(t).$$ We assert that $\rho\ll\phi$. Indeed, if $f \in \mathcal B_{p, q}$ is a positive element such that $\phi(f) = 0$; then, with $E=\{x: f(x) \neq 0\}$, one has that $\mu(E) = 0.$ Set $$\hat{f} = \norm{f}\chi_{E}\in \mathrm{Z}(\mathcal B_{p, q}).$$ It is clear that $f \leq \hat{f}$ and $\rho(\hat{f}) = 0$; hence, $\rho(f) = 0$.

By the Radon-Nikodym Theorem (see, for instance, Theorem 5.3.11 of \cite{Ped-book}), there is a positive (not necessarily bounded) operator $h$ on $H_\phi$ which is affiliated to $\pi_\phi(\mathcal B_{p, q})$  such that 
\begin{equation}\label{RN-thm}
\rho(a) = \left<h\pi_\phi(a)\overline{1_{\mathcal B_{p, q}}}, \overline{1_{\mathcal B_{p, q}}}\right>_\phi = \phi(h \pi_\phi(a)),\quad a\in \mathcal B_{p, q},
\end{equation} 
where $(H_\phi, \pi_\phi)$ is the GNS representation of $\mathcal B_{p, q}$ induced by $\phi$.
For each $t \in \mathbb{R}$, define a real function $f_t$ by $f_t(x) = \min\{x, t\}$, and set $h_t= f_t(h)$. Note that $h_t\in \pi_\phi(\mathcal B_{p, q})$.
Since $\rho(1)=1$, $$1=\phi(h)=\lim_{t\to\infty}\phi(h_t).$$
Thus, for the given $\eps$, there is a sufficiently large $t$ that for any element $a\in \mathcal B_{p, q}$ with $\|a\| \leq 1$,
\begin{equation}\label{cut-der}
\abs{\phi(h\pi_\phi(a)) - \phi(h_t\pi_\phi(a))}^2 = \abs{\phi((h-h_t)^\frac{1}{2}\pi_\phi(a)(h-h_t)^{\frac{1}{2}})}^2\leq \abs{\phi(h-h_t)}^2 \leq \left(\frac{\eps}{3}\right)^2.
\end{equation}

%Write $\chi=\sum_{i=1}^n c_i \chi_{E_i}$, where $E_i$ are mutually disjoint, and consider $F_i:=h^{-1}(E_i)\subseteq [0, 1]$. Set 

Regarding $h_t$ as an element of $\mathcal B_{p, q}$ (by picking an element of the pre-image), there is $\bar{h}\in (\mathcal B_{p, q})^+$ satisfying
\begin{equation}\label{simple-function} \norm{h_t-\bar{h}}_\infty < \frac{\eps}{3}, \end{equation} 
and 
$\bar{h}$ is a simple function; that is, there are disjoint Borel sets $E_1, E_2, ..., E_n\subseteq [0, 1]$ with $\bigsqcup_{i=1}^n E_i=[0, 1]$ and positive matrices $h_1, h_2, ..., h_n\in\mathrm{M}_{pq}(\Comp)$ such that $$\bar{h}(t) = h_i,\quad\textrm{if $t\in E_i$}.$$

Write $h_i=u^*_id_iu_i$, $i=1, 2, ..., n$, where $u_i$ are unitary matrics and  $d_i$ are diagonal matrices. Note that if $E_i \ni 0$, then $h_i, d_i \in \mathrm{M}_p(\Comp)\otimes 1_q$; and if $E_i \ni 1$, then $h_i, d_i \in 1_p\otimes \mathrm{M}_q(\Comp)$. 
We may require that the unitaries $u_i$ also satisfy the same property: $u_i \in \mathrm{M}_p(\Comp)\otimes 1_q$  if $E_i \ni 0$, and $u_i \in 1_p\otimes \mathrm{M}_q(\Comp)$ if $E_i \ni 1$. Define a unitary $u'\in \mathcal B_{p, q}$ by 
$$u'(t) = u_i^*w_iu_i,\quad \textrm{if $t\in E_i$},$$
where $w_i \in \mathrm{M}_{pq}(\Comp)$ is a unitary with all diagonal elements being zero, $w_i \in \mathrm{M}_p(\Comp)\otimes 1_q$ if $E_i \ni 0$ and $w_i \in 1_p\otimes \mathrm{M}_q(\Comp)$ if $E_i \ni 1$. Then, by \eqref{RN-thm}, \eqref{cut-der}, and \eqref{simple-function}, one has
\begin{eqnarray}
 \rho(u') & = & \phi(h\pi_\phi(u'))\approx_{\frac{\eps}{3}} \phi(h_t u') \approx_{\frac{\eps}{3}} \phi(\bar{h} u') \label{almost-0} \\ 
& = & \sum_{i=1}^n \mathrm{tr}(h_i u_i^*w_iu_i)   \mu(E_i) \nonumber \\
& = & \sum_{i=1}^n \mathrm{tr}(u_i^*d_iu_i u_i^*w_iu_i)   \mu(E_i) \nonumber\\
& = &  \sum_{i=1}^n \mathrm{tr}(d_iw_i)   \mu(E_i) = 0. \nonumber 
\end{eqnarray}

Consider the GNS representation $(\pi_\rho, H_\rho)$ of $Z_{p,q}$. By Corollary 4.5.10 of \cite{Ped-book}, the homomorphism $\pi_\rho$ extends to a normal surjective homomorphism $\pi_\rho'': \mathcal B_{p, q} \to \pi_\rho(Z_{p, q})''$. By the Kaplansky Density Theorem, there is a unitary $v\in \pi_\rho(Z_{p, q})$ such that 
\begin{equation}\label{purt-unitary}
\abs{\left<v \overline{1_{Z_{p, q}}}, \overline{1_{Z_{p, q}}}\right>_\rho - \left<\pi_\rho''(u') \overline{1_{Z_{p, q}}}, \overline{1_{Z_{p, q}}}\right>_\rho} < \frac{\eps}{3}.
\end{equation} 
Since $Z_{p. q}$ has stable rank one, it follows from Proposition 4.3 of \cite{Ror-Ann} that there is a unitary $u\in Z_{p, q}$ such that $\pi_\rho(u) = v$. By \eqref{purt-unitary}, we have $\abs{\rho(u) - \rho(u')} < \frac{\eps}{3}$,
 and hence by \eqref{almost-0}, $|\rho(u)| < \eps$, as desired.
\end{proof}

\begin{cor}\label{Gamma}
Let $A$ be a unital C*-algebra such that $A\cong A\otimes\mathcal Z$, and let $\rho$ be a state of $A$. Then, for any finite set $\mathcal F \subseteq A$ and any $\eps>0$, there is a unitary $u\in A$ such that
$$\norm{ua -au} < \eps,\quad a\in \mathcal F,\quad\textrm{and}\quad |\rho(u)| < \eps.$$
\end{cor}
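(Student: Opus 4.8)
The plan is to reduce the general claim to the special case already handled in Proposition \ref{Z-unitary} by exploiting the isomorphism $A\cong A\otimes\mathcal Z$ together with the fact that $\mathcal Z$ contains dimension drop algebras $Z_{p,q}$ with $p,q$ distinct primes (so that the unital embedding $Z_{p,q}\hookrightarrow\mathcal Z$ exists). First I would fix the identification $A\cong A\otimes\mathcal Z$ and transport the state $\rho$ to a state on $A\otimes\mathcal Z$, still denoted $\rho$. The elements $u$ we seek will be of the form $1_A\otimes w$ for a suitable unitary $w\in\mathcal Z$ sitting in a copy of $Z_{p,q}$; such elements automatically commute with $\mathcal F\otimes 1_{\mathcal Z}$, but of course $\mathcal F$ lives in $A$, not in $A\otimes 1_{\mathcal Z}$ under the isomorphism, so the commutation estimate $\|ua-au\|<\eps$ requires a McDuff-type approximation argument.

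Concretely, I would proceed as follows. Choose distinct primes $p,q$ and regard $Z_{p,q}$ as a unital subalgebra of $\mathcal Z$, hence $A\otimes Z_{p,q}\subseteq A\otimes\mathcal Z\cong A$. Using the standard fact that the unital embedding $Z_{p,q}\hookrightarrow\mathcal Z$ is approximately central in $\mathcal Z$ (equivalently, that $\mathcal Z\cong\mathcal Z\otimes\mathcal Z$ and one can slide a $Z_{p,q}$-copy into the second tensor leg), one obtains: for the given finite set $\mathcal F\subseteq A\cong A\otimes\mathcal Z$ and $\eps>0$, there is a copy of $Z_{p,q}$ inside $A$ whose unitaries $w$ satisfy $\|wa-aw\|<\eps$ for all $a\in\mathcal F$. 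This is the approximately-central embedding of the dimension drop algebra into a $\mathcal Z$-stable algebra, which I would either cite from the Jiang–Su or Rørdam–Winter literature or derive quickly from $A\cong A\otimes\mathcal Z\otimes\mathcal Z$ and a perturbation argument.

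The second ingredient is to control the state. Restrict $\rho$ to this copy of $Z_{p,q}$ inside $A$; this restriction is a state $\rho_0$ on $Z_{p,q}$. By Proposition \ref{Z-unitary}, for any $\eps>0$ there is a unitary $w\in Z_{p,q}$ with $|\rho_0(w)|<\eps$, i.e. $|\rho(w)|<\eps$ when $w$ is viewed inside $A$. Taking $u=w$, we get simultaneously $\|ua-au\|<\eps$ for $a\in\mathcal F$ (from the approximate centrality of the embedding) and $|\rho(u)|<\eps$ (from Proposition \ref{Z-unitary}). A small bookkeeping point: the two conditions refer to possibly different copies of $Z_{p,q}$, so I would fix one approximately-central copy first, and only then apply Proposition \ref{Z-unitary} to the restricted state on that fixed copy; no further adjustment is needed because Proposition \ref{Z-unitary} applies to an arbitrary state on $Z_{p,q}$.

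The main obstacle is establishing the approximately central embedding of $Z_{p,q}$ into $A$ with control over $\mathcal F$: one needs that, under $A\cong A\otimes\mathcal Z$, finitely many elements of $A$ can be approximately pushed into the first tensor factor $A\otimes 1$ while a prescribed $Z_{p,q}\subseteq\mathcal Z$ sits in the second factor. This follows from the $\mathcal Z$-absorption applied twice ($A\cong A\otimes\mathcal Z\cong A\otimes\mathcal Z\otimes\mathcal Z$) and a routine approximation, but it is the only genuinely nontrivial step; everything else is a direct appeal to Proposition \ref{Z-unitary} and elementary norm estimates.
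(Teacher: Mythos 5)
Your proposal is correct and is essentially the paper's own proof: the paper likewise takes an approximately central unital embedding $\iota\colon Z_{p,q}\to A$ (granted by $\mathcal Z$-absorption), applies Proposition \ref{Z-unitary} to the state $\rho\circ\iota$ on $Z_{p,q}$, and uses $\iota(u)$ as the desired unitary. The only difference is that you elaborate on why the approximately central embedding exists, which the paper simply asserts as a standard consequence of $A\cong A\otimes\mathcal Z$.
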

\begin{proof}
Since $A$ is $\mathcal Z$-absorbing, for any given $\eps>0$ and any finite set $\mathcal F\subseteq A$, there is a unital embedding $ \iota \colon Z_{p, q} \rightarrow A$ such that
$$\norm{a\iota(c) - \iota(c)a} < \eps,\quad a\in\mathcal F 
\text{\,\, and \,\,} c\in Z_{p, q},\ \norm{c}=1.$$
Consider the composition $\rho \circ \iota$, which is a state on $Z_{p, q}$. By Proposition \ref{Z-unitary}, there is a unitary $u\in Z_{p, q}$ satisfying $\abs{(\rho\circ \iota)(u)} < \eps$. Then $\iota(u)$ is a unitary with the desired properties.
\end{proof}

\begin{proof}[Proof of Theorem \ref{main-thm}]
Assume that the TAF algebra $A$ is $\mathcal Z$-absorbing. By Corollary \ref{Gamma}, there is a central sequence consisting of unitaries $(u_n)$ in $A$ with $\rho(u_n) \to 0$. But this contradicts Proposition \ref{non-Gamma} which asserts that $|\rho(u_n)| \to 1$.
\end{proof}

%\usepackage{url,cite,hyperref,amsmath,amsthm,amssymb,mathtools}
%\usepackage[margin=1.25in]{geometry}
%\usepackage{xcolor}

%\newtheorem{theorem}{Theorem}[section]
%\newtheorem{proposition}[theorem]{Proposition}
%\numberwithin{equation}{section}

\appendix

%\vskip 1.5cm
%\begin{center}
%{\bf APPENDIX} \\ \vskip 0.2cm {\small CALEB ECKHARDT} 
%\end{center}

%\vskip 0.5cm
%\section{Appendix}

%\title{Appendix}
%\author{Caleb Eckhardt}
%\address{Department of Mathematics, Miami University, Oxford, OH, 45056}
%\email{eckharc@miamioh.edu}

%\maketitle
\section{{} \\ by Caleb Eckhardt }
In this appendix we point out how to construct exact RFD C*-algebras $D$ that satisfy the conditions of Proposition \ref{non-Gamma} and therefore obtain \emph{exact}, simple separable tracially AF C*-algebras that are not $\mathcal{Z}$-absorbing by Theorem \ref{main-thm}.

It is a well-known corollary of homotopy invariance of quasidiagonality (\cite{Voiculescu91}) that any exact C*-algebra is the quotient of an RFD, exact C*-algebra.
(See Corollary 5.3 of \cite{Nate04}, for example.)
Therefore one immediately obtains the following
\begin{prop}\label{A1}
Let $\Gamma$ be a countable, discrete, exact non-inner-amenable group.  Then there is an exact, unital separable RFD C*-algebra $D$ that maps onto $C^*_{\textup{red}}(\Gamma)$ and subsequently produces an exact, simple separable tracially AF C*-algebra that is not $\mathcal{Z}$-absorbing by Theorem \ref{main-thm}.
\end{prop}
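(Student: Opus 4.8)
The plan is to reduce everything to Theorem \ref{main-thm}: it suffices to manufacture an exact, unital, separable RFD C*-algebra $D$ admitting a surjection onto $\mathrm{C}^\ast_{\mathrm{red}}(\Gamma)$, and to observe afterwards that the TAF algebra $A$ produced by the construction of \ref{construction} inherits exactness automatically. So the body of the argument is just the production of $D$, plus the bookkeeping that exactness propagates.

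First I would note that $B := \mathrm{C}^\ast_{\mathrm{red}}(\Gamma)$ is separable (as $\Gamma$ is countable), unital (it is a group C*-algebra), and \emph{exact}: this is Kirchberg's characterization of exactness of a discrete group, that $\Gamma$ is exact precisely when $\mathrm{C}^\ast_{\mathrm{red}}(\Gamma)$ is an exact C*-algebra. Next I would invoke the quoted fact — a corollary of Voiculescu's homotopy invariance of quasidiagonality \cite{Voiculescu91}, recorded as Corollary 5.3 of \cite{Nate04} — that every separable exact C*-algebra is a quotient of a separable exact RFD C*-algebra. Applied to $B$, this gives a separable exact RFD C*-algebra $D_0$ together with a surjection $D_0 \twoheadrightarrow B$. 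If $D_0$ fails to be unital, I would replace it by its forced unitization $D := \widetilde{D_0}$. This $D$ is still separable; still RFD, since a separating family of finite-dimensional representations of $D_0$ (each extended to $\widetilde{D_0}$ in the obvious way) together with the character $\widetilde{D_0}\to\Comp$ annihilating $D_0$ separates the points of $\widetilde{D_0}$; and still exact, being an extension of $\Comp$ by the exact algebra $D_0$. The surjection extends to a surjection $D \twoheadrightarrow B$, which is automatically unital since $B$ is unital.

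Now I would apply Theorem \ref{main-thm} to the pair $(D,\mathrm{C}^\ast_{\mathrm{red}}(\Gamma))$: $\Gamma$ is countable discrete and not inner amenable by hypothesis, and $D$ is a separable unital RFD C*-algebra having $\mathrm{C}^\ast_{\mathrm{red}}(\Gamma)$ as a quotient, so the associated TAF algebra $A = \varinjlim \mathrm{M}_{k_i}(D)$ is simple, separable, unital, and satisfies $A \ncong A\otimes\mathcal Z$. Finally I would check exactness of $A$: each building block $\mathrm{M}_{k_i}(D) \cong \Mat{k_i}\otimes D$ is exact, being the tensor product of the nuclear algebra $\Mat{k_i}$ with the exact algebra $D$; and exactness is preserved under inductive limits, so $A$ is exact.

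The step carrying the real content is the existence of the exact RFD lift $D_0$ of $\mathrm{C}^\ast_{\mathrm{red}}(\Gamma)$; but this is exactly the black box furnished by \cite{Voiculescu91,Nate04}, so in this write-up there is no genuine obstacle — as the surrounding text says, the proposition follows immediately once these ingredients are in place.
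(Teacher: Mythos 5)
Your argument is correct and is essentially the paper's own (one-line) proof: cite the corollary of Voiculescu's homotopy invariance of quasidiagonality that every separable exact C*-algebra is a quotient of a separable exact RFD C*-algebra, apply it to $\mathrm{C}^\ast_{\mathrm{red}}(\Gamma)$, and feed the resulting pair into Theorem \ref{main-thm}; your extra bookkeeping (unitization, exactness of the inductive limit) is harmless and correct. The only nitpick is your justification that $\widetilde{D_0}$ is exact ``being an extension of $\Comp$ by the exact algebra $D_0$'' --- exactness is not closed under general extensions, so you should instead appeal to the standard fact that a C*-algebra is exact if and only if its unitization is (or note that this particular extension is split).
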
 
The point then of this appendix is to point out that in the case of free groups $\mathbf{F}_d$, a minor variant of the above construction  produces an exact RFD C*-algebra $D$ that factors the natural quotient map $\mathrm{C}^\ast(\mathbf F_d)\rightarrow D\rightarrow \mathrm{C}^\ast_{\mathrm{red}}(\mathbf F_d).$ 
The pair $(D, \mathrm{C}^\ast_{\mathrm{red}}(\mathbf F_d))$ satisfies the hypothesis of Theorem \ref{main-thm} and the resulting C*-algebra $A$ is exact.
Furthermore this provides an example of a relatively exotic C*-algebra with good approximation properties. Many examples of exotic group C*-algebras have poor approximation properties---the standard free group examples are neither quasidiagonal nor exact (\cite{Ruan16}).

\begin{prop}\label{A2}
Let $d\geq 2$ and consider the free group $\mathbf{F}_d$ on $d$ generators.  Then there is an exact, RFD C*-algebra $D$ such that the standard quotient map $\mathrm{C}^\ast(\mathbf F_d)\rightarrow \mathrm{C}^\ast_{\mathrm{red}}(\mathbf F_d)$ factors as $\mathrm{C}^*(\mathbf{F}_d)\rightarrow D\rightarrow \mathrm{C}^*_r(\mathbf{F}_d).$ 
\end{prop}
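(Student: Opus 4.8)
The plan is to refine the matrix‑model construction used for the free groups elsewhere in this paper: instead of taking $D$ to be the whole preimage of $\mathrm{C}^\ast_{\mathrm{red}}(\mathbf{F}_d)$ inside an infinite product of matrix algebras, I would take the C*-subalgebra generated \emph{only} by the lifts of the $d$ standard unitaries, so that $D$ becomes an exotic group C*-algebra of $\mathbf{F}_d$ — in particular a quotient of $\mathrm{C}^\ast(\mathbf{F}_d)$ — while staying RFD and exact. In detail: by Corollary 8.4 of \cite{Haag-Thor} the algebra $\mathrm{C}^\ast_{\mathrm{red}}(\mathbf{F}_d)$ is MF, so fix an injective unital $\ast$-homomorphism $\iota\colon \mathrm{C}^\ast_{\mathrm{red}}(\mathbf{F}_d)\hookrightarrow \prod_i \mathrm{M}_{m_i}(\Comp)/\bigoplus_i \mathrm{M}_{m_i}(\Comp)$ for suitable $m_1,m_2,\dots$, and let $\kappa\colon \prod_i \mathrm{M}_{m_i}(\Comp)\to \prod_i \mathrm{M}_{m_i}(\Comp)/\bigoplus_i \mathrm{M}_{m_i}(\Comp)$ be the quotient map. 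Write $g_1,\dots,g_d$ for the free generators of $\mathbf{F}_d$ and $\lambda_1,\dots,\lambda_d$ for the corresponding standard unitaries of $\mathrm{C}^\ast_{\mathrm{red}}(\mathbf{F}_d)$. Since $\kappa$ has the unitary lifting property (lift a norm-one preimage, whose coordinates are then eventually invertible, and polar-decompose them), each $\iota(\lambda_j)$ lifts to a unitary $v_j=(\sigma_i(g_j))_i \in \prod_i \mathrm{M}_{m_i}(\Comp)$; and because $\mathbf{F}_d$ is free, each assignment $g_j\mapsto \sigma_i(g_j)$ extends to a finite-dimensional unitary representation $\sigma_i\colon\mathbf{F}_d\to\mathcal U(\mathrm{M}_{m_i}(\Comp))$. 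I would then \emph{define} $D$ to be the C*-subalgebra of $\prod_i \mathrm{M}_{m_i}(\Comp)$ generated by $v_1,\dots,v_d$, equivalently $D=\overline{\mathrm{span}}\,\{(\sigma_i(g))_i : g\in\mathbf{F}_d\}$.

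The formal verifications come next. The algebra $D$ is unital and generated by the $d$ unitaries $v_j$, so by the universal property of $\mathrm{C}^\ast(\mathbf{F}_d)$ (the universal unital C*-algebra on $d$ unitaries) there is a surjection $\mathrm{C}^\ast(\mathbf{F}_d)\twoheadrightarrow D$ with $g_j\mapsto v_j$; and since the coordinate projections of $\prod_i \mathrm{M}_{m_i}(\Comp)$ form a separating family of finite-dimensional representations, the subalgebra $D$ is RFD. Applying $\kappa$ gives $\kappa(v_j)=\iota(\lambda_j)$, hence $\kappa(D)=\mathrm{C}^\ast(\iota(\lambda_1),\dots,\iota(\lambda_d))=\iota(\mathrm{C}^\ast_{\mathrm{red}}(\mathbf{F}_d))$ (using that $\iota$, being injective, is isometric with closed range); composing $\kappa|_D$ with $\iota^{-1}$ yields a surjection $D\twoheadrightarrow \mathrm{C}^\ast_{\mathrm{red}}(\mathbf{F}_d)$ sending $v_j\mapsto\lambda_j$. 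Therefore $\mathrm{C}^\ast(\mathbf{F}_d)\to D\to \mathrm{C}^\ast_{\mathrm{red}}(\mathbf{F}_d)$ is the unique unital $\ast$-homomorphism taking $g_j$ to $\lambda_j$, i.e.\ the standard quotient map, as required.

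The remaining — and only delicate — point is the exactness of $D$, and here it matters that we kept an extension with good quotient: $D$ a priori only sits inside the generally non-exact product $\prod_i \mathrm{M}_{m_i}(\Comp)$, so exactness cannot simply be inherited from the ambient algebra. Setting $J = D\cap\bigoplus_i \mathrm{M}_{m_i}(\Comp)$, which is a closed ideal of $D$ since $\bigoplus_i \mathrm{M}_{m_i}(\Comp)$ is an ideal of the product, the previous paragraph identifies $D/J$ with $\kappa(D)\cong \mathrm{C}^\ast_{\mathrm{red}}(\mathbf{F}_d)$, so one has a short exact sequence
\[
0\longrightarrow J\longrightarrow D\longrightarrow \mathrm{C}^\ast_{\mathrm{red}}(\mathbf{F}_d)\longrightarrow 0 .
\]
Now $J$ is a C*-subalgebra of the AF — hence nuclear, hence exact — algebra $\bigoplus_i \mathrm{M}_{m_i}(\Comp)$, so $J$ is exact, while $\mathrm{C}^\ast_{\mathrm{red}}(\mathbf{F}_d)$ is exact because it embeds into $\mathcal O_2$ (\cite{Choi79}); since exactness is preserved under extensions, $D$ is exact. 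I expect this to be the crux: one must check that passing from the full preimage $\kappa^{-1}(\iota(\mathrm{C}^\ast_{\mathrm{red}}(\mathbf{F}_d)))$ to the subalgebra generated by the lifted generators — which is precisely what makes $D$ a genuine quotient of $\mathrm{C}^\ast(\mathbf{F}_d)$ — does not destroy the presentation of $D$ as an extension of $\mathrm{C}^\ast_{\mathrm{red}}(\mathbf{F}_d)$ by a nuclear ideal; everything else is bookkeeping. Feeding the pair $(D,\mathrm{C}^\ast_{\mathrm{red}}(\mathbf{F}_d))$ into Theorem~\ref{main-thm} then produces the advertised exact, simple, separable tracially AF algebra $A$ that is not $\mathcal Z$-absorbing.
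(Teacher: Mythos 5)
Your construction and most of the verifications are sound: lifting the generators to unitaries of $\prod_i \mathrm{M}_{m_i}(\Comp)$, the resulting surjections $\mathrm{C}^\ast(\mathbf F_d)\twoheadrightarrow D$ and $D\twoheadrightarrow \mathrm{C}^\ast_{\mathrm{red}}(\mathbf F_d)$, the RFD property, and the identification $D/J\cong \mathrm{C}^\ast_{\mathrm{red}}(\mathbf F_d)$ with $J=D\cap\bigoplus_i \mathrm{M}_{m_i}(\Comp)$ are all fine. The gap is exactly at the step you yourself flag as the crux: the assertion that ``exactness is preserved under extensions'' is \emph{false} in general. By a theorem of Kirchberg (the same paper in which quotients of exact algebras are shown to be exact), there exist non-exact extensions of exact C*-algebras, and the counterexamples can even have nuclear ideal. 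What is true is that a \emph{locally split} (in particular, a split) extension of exact C*-algebras is exact. In the full-preimage version (Example 2.3 of the paper, where $D=\kappa^{-1}(\iota(\mathrm{C}^\ast_{\mathrm{red}}(\mathbf F_d)))$) one gets local splitting essentially for free, because the finite-rank central projections $P_n=1_{\mathrm{M}_{m_1}}\oplus\cdots\oplus 1_{\mathrm{M}_{m_n}}\oplus 0\oplus\cdots$ lie in $\bigoplus_i \mathrm{M}_{m_i}(\Comp)\subseteq D$ and commute with $D$, making the extension quasidiagonal. But the whole point of your refinement is to shrink $D$ to the subalgebra generated by the lifts $v_1,\dots,v_d$, and after this shrinking the $P_n$ (and the compressions $x\mapsto(1-P_n)x(1-P_n)$) no longer land in $D$; there is no evident c.c.p.\ local lifting of $\mathrm{C}^\ast_{\mathrm{red}}(\mathbf F_d)\to D$, and one cannot appeal to local reflexivity of $D$ without already knowing $D$ is exact. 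Nor can exactness be inherited from the ambient algebra, since $\prod_i \mathrm{M}_{m_i}(\Comp)$ is not exact. So as written the exactness of your $D$ is not established.

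This is precisely the obstacle the paper's proof is built to avoid. Eckhardt does not work inside $\prod_i \mathrm{M}_{m_i}(\Comp)$ at all: he forms $A=\mathrm{M}_2(\widetilde{C_0(0,1]\otimes \mathrm{C}^\ast_{\mathrm{red}}(\mathbf F_d)})$, which is exact and (by homotopy invariance of quasidiagonality) quasidiagonal, and then invokes Halmos's theorem to realize $A$ as the quotient in a \emph{split} exact sequence $0\to K(H)\to B+K(H)\to A\to 0$ with $B$ RFD. The splitting is what makes $B+K(H)$, and hence its subalgebra $B$, exact; the unitaries $V_i$ are then produced inside the exact RFD algebra $B$, so $D=\mathrm{C}^\ast(V_1,\dots,V_d)$ inherits exactness as a C*-subalgebra rather than via an extension argument. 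The passage through the cone (to make the generators homotopic to the identity, hence liftable to unitaries) and through a split extension is therefore not optional bookkeeping but the substance of the proof. To repair your argument you would need to prove that your particular extension $0\to J\to D\to \mathrm{C}^\ast_{\mathrm{red}}(\mathbf F_d)\to 0$ is locally split, which is not clear for an arbitrary choice of unitary lifts of an MF embedding.
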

\begin{proof} Choi showed in \cite{Choi79} that $\mathrm{C}^*_{\textup{red}}(\mathbf{F}_d)$ embeds into the (nuclear) Cuntz algebra $\mathcal{O}_2$ and is therefore exact. Let $C$ denote the unitization of $C_0(0,1]\otimes \mathrm{C}^*_{\textup{red}}(\mathbf{F}_d)$ and set $A=M_2(C).$ By standard facts about exact C*-algebras, $A$ is an exact C*-algebra (see \cite{BO-AF-book} for example).  By homotopy invariance of quasidiagonality (\cite{Voiculescu91}) it follows that $A$ is also quasidiagonal.

By a result of Halmos (see \cite[Corollary 7.5.2]{BO-AF-book}  for the statement used below) there is a Hilbert space $H$, a sequence of orthogonal finite rank projections $p_n\in B(H)$ whose sum  increases strongly to the identity, and a C*-algebra $B\subseteq B(H)$ that commutes with each $p_n$ and fits into a split exact sequence 
\begin{equation}\tag{A.3} \label{eq:splitsequence}
0\rightarrow K(H)\rightarrow B+K(H)\rightarrow A\rightarrow 0
\end{equation}
where $K(H)$ denotes the compact operators. Since $A$  and $K(H)$ are exact and the sequence is split, it follows that $B+K(H)$ is exact and therefore so is $B.$ By definition, $B$ is RFD and we may without loss of generality suppose that $p_nK(H)p_n\subseteq B$ for all $n.$

Let $u_1,...,u_d\in \mathrm{C}^*_{\textup{red}}(\mathbf{F}_d)$ be standard generating unitaries.  Then for each $1\leq i\leq d$, the unitary $u_i\oplus u_i^{-1}\in M_2(\mathrm{C}^*_{\textup{red}}(\mathbf{F}_d))$ is homotopic to the identity. Therefore there are unitaries $U_i\in A$ that lift each $u_i\oplus u_i^{-1}.$ Since the sequence in (\ref{eq:splitsequence}) splits we may assume that each $U_i$ belongs $B+K(H).$  Since $p_nK(H)p_n\subseteq B$ for each $n$, it is straightforward to find unitaries $V_i\in B$ that are compact perturbations of $U_i.$

The inclusion map $B\hookrightarrow B+K(H)$ induces an isomorphism 
\[
B/(B\cap K(H))\cong (B+K(H))/K(H)\cong A,
\]
and hence the unitaries $V_i$ are lifts of the $U_i.$  Let $D=C^*(V_1,...,V_d)\subseteq B.$  Since $B$ is exact and RFD, $D$ is also exact and RFD.  
We now have a quotient $D\rightarrow \mathrm{C}^*_{\textup{red}}(\mathbf{F}_d)$ defined by
\begin{equation*}
V_i\in D\mapsto U_i\in A\mapsto \mathrm{diag}(u_i, u_i^{-1})\in M_2(C^*_{\textup{red}}(\mathbf{F}_d))\mapsto u_i\in C^*_{\textup{red}}(\mathbf{F}_d).
\end{equation*}
Since each $V_i$ is unitary,  we obtain the natural factorization $\mathrm{C}^*(\mathbf{F}_d)\rightarrow D\rightarrow \mathrm{C}^*_{\textup{red}}(\mathbf{F}_d).$
\end{proof}

%\bibliographystyle{plainurl}
%\bibliography{operator_algebras}

%\addresseshere

\end{document}